\documentclass{amsart}
\usepackage{amsfonts}
\usepackage{amsmath,amssymb}
\usepackage{amsthm}
\usepackage{amscd}
\usepackage{enumitem}
\usepackage{graphics}
\usepackage{graphicx}

\theoremstyle{remark}{
\newtheorem{Def}{{\rm Definition}}

\newtheorem{Rem}{{\rm Remark}}
\newtheorem{Prob}{{\rm Problem}}

}
\theoremstyle{plain}
{

\newtheorem{Prop}{Proposition}
\newtheorem{Thm}{Theorem}

}

\begin{document}
\title[Compactifying real analytic functions and resulting Reeb spaces]{Compactifying real analytic functions and resulting Reeb spaces}
\author{Naoki kitazawa}
\keywords{Smooth, real analytic, or real algebraic (real polynomial) functions and maps. Compactifications of spaces and (differentiable) smooth, real algebraic, real analytic, or continuous maps. Reeb spaces. Cell complexes. 1-dimensional cell complexes whose edges are oriented. Peano continua. (Di)graphs. Reeb (di)graphs. \\
\indent {\it \textup{2020} Mathematics Subject Classification}: Primary~26C05, 26E05, 54C30, 54F15, 57R45, 58C05.}

\address{Osaka Central Advanced Mathematical Institute (OCAMI) \\
3-3-138 Sugimoto, Sumiyoshi-ku Osaka 558-8585
TEL: +81-6-6605-3103
}
\email{naokikitazawa.formath@gmail.com}
\urladdr{https://naokikitazawa.github.io/NaokiKitazawa.html}
\maketitle
\begin{abstract}
We formulate compactifications of continuous maps naturally. We consider real analytic functions mainly. We are interested in topological properties and combinatorial ones of explicit resulting maps. For understanding them, we use their {\it Reeb spaces}, being quotient spaces of the spaces of the domains of the functions and defined by the equivalence relation identifying two points in same components of their level sets. They are known to be $0$- or $1$-dimensional (metrizable) cell-complexes, in our situations or more general certain tame cases. 

Reeb spaces have been important in understanding topological properties and combinatorial ones of functions and spaces roughly, since the last century.

These compactifications have been explicitly studied by the author previously and recently. We have obtained real algebraic functions whose Reeb spaces are not so complicated and which seem to be of most natural and simplest. We present new discussions and examples.

\end{abstract}
%【REVISE】 combinatoric ～ is → combinatorial object. It is .
%【REVISE】  such that a point is a vertex if and only if the corresponding connected component of the level set contains some singular points → whose vertex set is the set of all points containing some singular points in the corresponding connected component of the level set .
%【REVISE】 We delete "extending the result before".
\section{Introduction.}
\label{sec:1}
\subsection{Reeb spaces?}
The {\it Reeb spaces} $R_c$ of continuous functions $c:X \rightarrow Y$ where $Y$ is a real number field $\mathbb{R}$ (the case of real-valued functions) or the unit circle $S^1:=\{(x_1,x_2) \in {\mathbb{R}}^2 \mid  {x_1}^2+{x_2}^2=1\} \subset {\mathbb{R}}^2$  (the case of circle-valued functions) on topological spaces $X$ are fundamental and strong tools in understanding $X$ and topological properties and combinatorial properties of the functions. \cite{reeb} is one of related pioneering papers. 

We give its rigorous definition. Let $c:X \rightarrow Y$ be a continuous map.
We have the equivalence relation ${\sim}_c$ on $X$ and the quotient space $R_c:=X/{\sim c}$: for $x_1,x_2 \in X$, $x_1 {\sim}_c x_2$ if and only if they are in a same connected component of a {\it preimage} $c^{-1}(y)$. A {\it level set} of the function $c:X \rightarrow Y:=\mathbb{R}, S^1$ is a preimage $c^{-1}(y)$. A {\it contour} of $c$ is a connected component of a preimage of of $c$. We have the quotient map $q_c:X \rightarrow R_c$ and the unique continuous map $\bar{c}:R_c \rightarrow Y$ with $c=\bar{c} \circ q_c$.

They have been shown to be graphs for certain nice real-valued functions on (compact) manifolds such as Morse(-Bott) functions and naturally generalized ones, in \cite{izar, martinezalfaromezasarmientooliveira}, and shown in \cite{saeki1, saeki2}, for real-valued functions, more generally. We can apply this for circle-valued cases, similarly, where we do not need to know the arguments.

We explain singularity of smooth maps. A {\it singular} point of a differentiable map $c:X \rightarrow Y$ is a point of the manifold $X$ where the rank of the differential is smaller than both the dimensions of $X$ and $Y$. Let $S(c)$ denote the set of all singular points of $c$ and it is the {\it singular set} of $c$. We use "{\it critical}" instead of "singular" in the case where the manifold of the target is $1$-dimensional.

For the graphs above, a point $v$ is a vertex if ${q_c}^{-1}(v)$ contains some critical point of $c$. A contour of $c$ is said to be {\it critical}. A contour of $c$ which is not critical is {\it regular}. $R_c$ is the {\it Reeb graph} of $c$ and by the rule that each edge $e$ incident to $v_1$ and $v_2$ is oriented as one departing from $v_1$ and entering $v_2$ induced from $\mathbb{R}$, oriented naturally by the canonical orders on $\mathbb{R}$, or $S^1$, oriented in the counter clockwise way, it is a digraph (the {\it Reeb digraph} of $c$)

We explain general theory such as \cite{gelbukh1, gelbukh2, gelbukh3, gelbukh4, saeki1, saeki2}. For smooth real-valued functions on closed and connected manifolds and more generally, continuous real-valued functions on compact, connected, locally connected and spaces ({\it Peano continua}) which are metrizable, the Reeb spaces are (at most $1$-dimensional and metrizable) Peano continua. For certain general (real-valued) continuous functions on topological spaces, it is studied whether the Reeb spaces are Hausdorff. 
\subsection{Reconstructing nice smooth, real algebraic, or real analytic functions with given Reeb graphs.}
Related to this, interest of the author lies in reconstruction of nice smooth functions with given Reeb graphs. This has been first launched by Sharko in \cite{sharko} and followed by Masumoto, Michalak, Saeki, and so on, in \cite{masumotosaeki, michalak} for example. These are essentially on reconstruction of nice smooth functions on closed surfaces which are locally Morse or represented as certain elementary polynomials around critical points of them. The author has also contributed to it with their higher dimensional cases, respecting topologies of level sets in addition. See papers such as \cite{kitazawa1, kitazawa4}. For related studies on non-compact (proper) cases, see \cite{kitazawa2} for example. \cite{kitazawa5} is also a related preprint of the author.

Later, the author has been interested in explicit construction of functions with stronger regularity, in other words, {\it real algebraic}, or {\it real analytic} functions. In the closed manifold cases, the author has systematically constructed and shown realizations of given Reeb (di)graphs of certain classes by constructing real algebraic functions. The author has generalized the so-called natural height function of the $m$-dimensional unit sphere $S^m:=\{x:=(x_1 \cdots x_{m+1}) \in {\mathbb{R}}^{m+1} \mid {\Sigma}_{j=1}^{m+1} {x_j}^2=1 \}$ in the ($m+1$)-dimensional Euclidean space ${\mathbb{R}}^{m+1}$ with $m \geq 1$. It is the restriction of the projection to the first component by the projection ${\pi}_{m+1,1}(x):=x_1$. The canonical projection ${\pi}_{k,k_1}:{\mathbb{R}^{k}} \rightarrow {\mathbb{R}}^{k_1}$ is defined by ${\pi}_{k,k_1}(x):=x_1$ ($x:=(x_1.x_2) \in {\mathbb{R}}^{k_1} \times {\mathbb{R}}^{k_2}={\mathbb{R}}^{k}$). For related studies, see the pioneering paper \cite{kitazawa3} and see also \cite{kitazawa6, kitazawa7}.

As related another study, in \cite{basucoxpercival}, Reeb spaces of so-called {\it proper} {\it definable} maps of a certain class are shown to be realized as so-called {\it proper} {\it definable quotients}, where the author is not specialized in related theory. 

We use $c {\mid}_Z$ for the restriction of a map $c:X \rightarrow Y$ to a subset $Z\subset X$. Here, an $m$-dimensional {\it real analytic} ({\it algebraic}) manifold $M \subset {\mathbb{R}}^{m+k}$ means a union of connected components of the zero set of a {\it real analytic} (resp. {\it polynomial}) map $e_{m,k}:{\mathbb{R}}^{m+k} \rightarrow {\mathbb{R}}^k$ such that the restriction $e_{m,k} {\mid}_{M}$ has no singular point. A {\it real analytic} (resp. {\it algebraic}) map means a map represented as ${\pi}_{m+k,k^{\prime}} {\mid}_{M}$ with $m+k>k^{\prime} \geq 1$.

We discuss Problem \ref{prob:1}, which are also questioned and solved with explicit cases in \cite{kitazawa8} first, and in \cite{kitazawa9, kitazawa10, kitazawa11, kitazawa12} for example.
\begin{Prob}
\label{prob:1}
How should we investigate realization of Reeb spaces which are not homeomorphic to finite graph by nice smooth functions?
\end{Prob}

\begin{Prob}
\label{prob:2}
Can we explain nice examples for Problem \ref{prob:1}?
\end{Prob}

Related to this, see also This is also presented in \cite[Problem 1]{gelbukh5}.

This has been first answered affirmatively in \cite{kitazawa8} (\cite[Theorems 1 and 2]{kitazawa8}). This study is followed by the author himself exclusively. See \cite{kitazawa9, kitazawa10, kitazawa11} and a recent preprint \cite{kitazawa12}. Our main result consists of new result related to these Problems, with new explicit arguments.

In the next section, we review main ingredients of \cite{kitazawa12}, a method for representing Reeb spaces which are not homeomorphic to any finite graph, by simpler oriented cell-complexes of dimension $1$, compactly. We introduce our main result (Theorems \ref{thm:1} and \ref{thm:2}).
In the third section, we review arguments important in our present study. In the fourth section, we prove our main result and have their refined versions as Theorems \ref{thm:3} and \ref{thm:4}.

\section{Our main result.}
For a topological space $X$ and its subspace $Y$, ${\overline{Y}}^X$ is for the closure of $Y$ in $X$.

We use $D^k:=\{x:=(x_1 \cdots x_{n}) \in {\mathbb{R}}^{k} \mid {\Sigma}_{j=1}^{k} {x_j}^2 \leq 1 \}$ in the $k$-dimensional Euclidean space ${\mathbb{R}}^{k}$ with $k \geq 1$.
We use  $S^0:=\{-1,1\} \subset \mathbb{R}$. Let $D^0$ mean a one-point set and let $S^{-1}$ mean the emptyset $\emptyset$.

A {\it cell complex} $(X,\{\{e_{j_1,j_2}\}_{j_2 \in J_{j_1}}\}_{j_1=0}^d)$ of dimension $d$ means a pair of a Hausdorff space $X$ and its underlying set ${\sqcup}_{j_1=0}^d ({\sqcup}_{j_2 \in J_{j_1}} e_{j_1,j_2})$ and with the topology as follows.
\begin{itemize}
\item $e_{j_1,j_2}$ is a subspace of $X$ homeomorphic to $D^{j_1}-S^{j_1} \subset {\mathbb{R}}^{j_1}$ and a {\it $j_1$-cell} of the cell complex. 
\item The closure of $e_{j_1,j_2}$ in $X$ is a union of $e_{j_1,j_2}$ and some of $e_{{j_1}^{\prime},{j_2}^{\prime}}$ with ${j_1}^{\prime}<j_1$.
\end{itemize}
As a specific case, a cell complex with the following is a {\it CW complex}.
\begin{itemize}
\item The closure of $e_{j_1,j_2}$ in $X$ is a union of $e_{j_1,j_2}$ and finitely many sets $e_{{j_1}^{\prime},{j_2}^{\prime}}$ with ${j_1}^{\prime}<j_1$.
\item $O \subset X$ is open in $X$ if and only if for each $e_{j_1,j_2}$, $O \bigcap e_{j_1,j_2}$ is open in $e_{j_1,j_2}$.
\end{itemize}
The following are well-known of important facts on general topology, where we abuse the notation.
\begin{Prop}
For a topological space $X$ with the structure of a certain cell complex by considering suitable ${\sqcup}_{j_1=0}^d ({\sqcup}_{j_2 \in J_{j_1}} e_{j_1,j_2})$, $d$ is a topological invariant for $X$.
\begin{enumerate}
\item For a topological space $X$ which has the structure of a CW complex by considering suitable ${\sqcup}_{j_1=0}^d ({\sqcup}_{j_2 \in J_{j_1}} e_{j_1,j_2})$, $d$ is a topological invariant for $X$.
\item For a topological space $X$ which has the structure of a cell complex by considering suitable ${\sqcup}_{j_1=0}^d ({\sqcup}_{j_2 \in J_{j_1}} e_{j_1,j_2})$ and which is metrizable, $d$ is a topological invariant for $X$.
\end{enumerate}
\end{Prop}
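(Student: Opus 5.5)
The plan is to characterize $d$ intrinsically as the largest integer $k$ such that $X$ contains a subspace homeomorphic to ${\mathbb{R}}^k$ (equivalently, to $D^k$). Such a characterization involves only the topology of $X$, so it immediately gives the invariance in both cases (1) and (2). The proof therefore splits into a lower bound, which is easy and the same in both cases, and an upper bound, which needs a different argument in each case.

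\emph{Lower bound.} Pick any $d$-cell $e_{d,j_2}$; it is homeomorphic to $D^d-S^{d-1}\cong{\mathbb{R}}^d$, so $X$ contains a copy of ${\mathbb{R}}^d$. In the CW case one can say more. By the closure condition no cell has $e_{d,j_2}$ in its boundary, so $e_{d,j_2}$ meets each cell either in itself or not at all. By the weak topology, $e_{d,j_2}$ is therefore open in $X$. This openness is not needed for the argument, but it gives the more robust form of the invariant: $X$ contains an open subset homeomorphic to ${\mathbb{R}}^d$.

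\emph{Upper bound, CW case.} Suppose $h:D^k\rightarrow X$ is an embedding with $k>d$. I will use the standard fact that a compact subset of a CW complex meets only finitely many cells. Then $h(D^k)$ lies in a finite subcomplex $K$, namely the finite union of the closures of those cells. Each closed cell is a continuous image of $D^{j_1}$ with $j_1\leq d$; it is compact metrizable and has covering dimension at most $j_1$. By the sum theorem for finitely many closed sets, $\dim K\leq d$. Monotonicity of dimension for closed subsets then gives $k=\dim h(D^k)\leq d$, a contradiction. Invariance of domain is not even needed here; only the classical fact $\dim D^k=k$ is used.

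\emph{Upper bound, metrizable case.} Here a compact set may meet infinitely many cells, so I instead bound $\dim X\leq d$ directly, using dimension theory for metrizable spaces, where $\dim=\operatorname{Ind}$ and the monotonicity, sum and addition theorems hold for arbitrary subsets. Let $X_{j_1}$ be the union of all $j_1$-cells, so $X=\bigsqcup_{j_1=0}^d X_{j_1}$. If each $\dim X_{j_1}\leq j_1$, then the addition theorem $\dim(A\cup B)\leq \dim A+\dim B+1$, applied inductively, gives $\dim X\leq \sum_{j_1}(j_1+1)-1$. That bound is too weak. I would instead use the decomposition theorem in the form: $\dim X\leq d$ iff $X$ is a union of $d+1$ subsets of dimension $\leq 0$. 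For this, split each $j_1$-cell (homeomorphic to ${\mathbb{R}}^{j_1}$) into $j_1+1$ zero-dimensional pieces via the classical decomposition of ${\mathbb{R}}^{j_1}$ by the number of rational coordinates. Then regroup the pieces across dimensions into $d+1$ sets. Once $\dim X\leq d$ is known, an embedded $D^k$ with $k>d$ is impossible.

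The main obstacle is exactly this regrouping. I need each group (a union of zero-dimensional pieces taken from possibly infinitely many cells) to be zero-dimensional. That requires knowing that within the stratum $X_{j_1}$ the individual $j_1$-cells are open, i.e.\ that $X_{j_1}$ is the topological sum of its cells. The closure condition only says that no single other $j_1$-cell accumulates onto $e_{j_1,j_2}$, not that their union does not. My first attempt is to use the locally finite sum theorem together with metrizability. If that fails, I will fall back on the countable sum theorem applied to $\sigma$-discrete refinements furnished by the Stone / Bing--Nagata--Smirnov theorem, so that each $X_{j_1}$ is shown to have dimension $\leq j_1$ as a union of $\sigma$-discrete families of closed pieces. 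I would then combine this with the decomposition theorem so that the strata fit together into $d+1$ zero-dimensional sets.
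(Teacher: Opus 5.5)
The paper gives no proof of this proposition; it quotes it as well known. So your argument has to stand on its own, and in case (2) it cannot. The step you single out as the main obstacle is not merely unproved but false, and so is the inequality $\dim X\le d$ it is meant to deliver. The paper's cell complexes are only required to satisfy three conditions: $X$ is Hausdorff, each cell is homeomorphic to an open disk, and $\overline{e_{j_1,j_2}}^X$ is $e_{j_1,j_2}$ together with some lower-dimensional cells. Nothing like closure-finiteness, a weak topology or local finiteness is imposed. This gives three structures on ${\mathbb{R}}^2$:
\begin{itemize}
\item declaring every point a $0$-cell gives a metrizable cell complex with $d=0$, since points are closed and so the closure condition holds;
\item the lines ${\mathbb{R}}\times\{y\}$, $y\in{\mathbb{R}}$, give one with $d=1$;
\item a single $2$-cell gives $d=2$.
\end{itemize}
In the first example the stratum $X_0={\mathbb{R}}^2$ is exactly not the topological sum of its cells. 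No locally finite, countable or $\sigma$-discrete sum theorem can rescue the bound, because $\dim{\mathbb{R}}^2=2$. So (2), and the unqualified first sentence of the proposition, are false as formulated. Your characterization of $d$ as the largest $k$ with ${\mathbb{R}}^k\subset X$ also fails for such complexes.

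Your strategy does go through once a finiteness hypothesis is added, for instance countably many cells (as for the Reeb spaces the paper has in mind) or a locally finite family of cells. Then no regrouping via the decomposition theorem is needed. A cell $e$ is locally compact and dense in the Hausdorff space $\overline{e}^X$, hence open in it. It is therefore an $F_\sigma$ in the metrizable $X$, i.e.\ a countable union of closed subsets of $X$, each embedded in ${\mathbb{R}}^{\dim e}$. The countable sum theorem (applied locally in the locally finite case) gives $\dim X\le d$, and your lower bound gives $\dim X=d$.

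In case (1) your outline is sound. The lemma that a compact set meets only finitely many cells holds here, since its standard proof uses only closure-finiteness, the weak topology and the Hausdorff property. Your justification of $\dim K\le d$, however, does not hold. The paper's definition supplies no characteristic maps, so a closed cell need not be a continuous image of $D^{j_1}$, nor even compact. For example, $X=\{(x,\sin\frac{1}{x})\mid x>0\}\cup\{(0,0)\}\subset{\mathbb{R}}^2$, with one $1$-cell and one $0$-cell, is closure-finite and has the weak topology with respect to its closed cells, yet $\overline{e}^X=X$ is not compact. In any case, being a continuous image of $D^{j_1}$ does not bound the dimension by $j_1$ (Peano curves). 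A direct repair avoids dimension theory altogether:
\begin{itemize}
\item Let $h:D^k\rightarrow X$ be an embedding. Among the finitely many cells meeting $h(\operatorname{int}D^k)$, choose one, $e$, of maximal dimension $j$.
\item The closures of the other such cells are closed and disjoint from $e$. Hence $h(\operatorname{int}D^k)\cap e$ is a nonempty open subset of $h(\operatorname{int}D^k)$.
\item So some nonempty open subset of ${\mathbb{R}}^k$ embeds in $e\cong{\mathbb{R}}^j$, and invariance of domain gives $k\le j\le d$.
\end{itemize}
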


We consider $d$ as the {\it dimension} of such a space $X$ and $X$ (the cell complex) can be referred to be a {\it $d$-dimensional} cell complex.

In \cite{kitazawa12}, we consider a connected Hausdorff space $G$ and a triplet $(G,S_G,c_G:G \rightarrow Y)$ ($Y:=\mathbb{R}, S^1$) with the following. We call this triplet a {\it pre-digraph}. The case $Y:=S^1$ is explicitly discussed in the present paper, where related exposition is in \cite{kitazawa12}, shortly.
\begin{itemize}
\item $S_G$ is a closed subset of $G$ and $G-S_G$ is a disjoint union ${\sqcup}_{j \in J_G} e_{G,j}$ of copies $e_{G,j}$ of  $\mathbb{R}$. By regarding $e_{G,j}$ as $1$-cells and points of $S_G$ as $0$-cells, we have a $1$-dimensional cell complex. The restriction $C_G {\mid}_{e_{G,j}}$ is injective.

\item ${\rm NF}(G,S_G)$ is the set of all points of which we cannot have connected neighborhoods being homeomorphic to some finite and connected graphs and being disjoint from $S_G$. We call such a point a {\it non-finite} point of $(G,S_G)$.
\end{itemize}
We consider a set of all connected components of $G-S_G$. For two connected components $C_{G-{\rm NF}(G,S_{G}),c_{G},1}$ and $C_{G-{\rm NF}(G,S_{G}),c_{G},2}$ of $G-{\rm NF}(G,S_{G})$, if and only if at least one of the following holds, then $C_{G-{\rm NF}(G,S_{G}),c_{G},1} {\sim}_{G-{\rm NF}(G,S_{G}),c_{G}} C_{G-{\rm NF}(G,S_{G}),c_{G},2}$. 
\begin{itemize}
\item For some point $s \in {\rm NF}(G,S_{G})$, there exist non-empty families $\{e_{G,j_1}\}_{j_1 \in J_{G,C_{G-{\rm NF}(G,S_{G}),c_{G},1}} \subset J_G} \subset C_{G-{\rm NF}(G,S_{G}),c_{G},1}$ and $\{e_{G,j_2}\}_{j_2 \in J_{G,C_{G-{\rm NF}(G,S_{G}),c_{G},1}} \subset J_G} \subset C_{G-{\rm NF}(G,S_{G}),c_{G},2}$ such that the restrictions of $c_G$ to the closures of their disjoint unions in $G$ have the maximum value at $s$: $s$ is said to be an {\it ascending} point for these connected components. 
\item  For some point $s \in {\rm NF}(G,S_{G})$, there exist non-empty families $\{e_{G,j_1}\}_{j_1 \in J_{G,C_{G-{\rm NF}(G,S_{G}),c_{G},1}} \subset J_G} \subset C_{G-{\rm NF}(G,S_{G}),c_{G},1}$ and $\{e_{G,j_2}\}_{j_2 \in J_{G,C_{G-{\rm NF}(G,S_{G}),c_{G},1}} \subset J_G} \subset C_{G-{\rm NF}(G,S_{G}),c_{G},2}$ such that the restrictions of $c_G$ to the closures of their disjoint unions in $G$ have the minimum value at $s$: $s$ is said to be a {\it descending} point for these connected components .
\end{itemize}
From this, we can uniquely define the equivalence relation ${\sim}_{G-{\rm NF}(G,S_{G}),c_{G}}$ on the sets of all connected components of $G-S_G$. 
More precisely, we consider the equivalence relation generated by the defined relation. We can have a $0$- or $1$-dimensional cell-complex the set of all of whose $0$-cells consists of all these equivalence classes $[C_{G-{\rm NF}(G,S_{G}),c_{G}}]$. 
We can define {\it ascending} points and {\it descending} points for the equivalence classes naturally. 
Furthermore, the set of all its $1$-cells consists of all elements of ${\rm NF}(G,S_G)$ and each of the element departs from its $0$-cell $v_{[C_{G-{\rm NF}(G,S_{G}),c_{G}}]}$ for the equivalence class $[C_{G-{\rm NF}(G,S_{G}),c_{G}}]$ if and only if it is for an ascending point for $[C_{G-{\rm NF}(G,S_{G}),c_{G}}]$ and each of  its $1$-cells enters $v_{C_{G-{\rm NF}(G,S_{G}),c_{G}}}$ if and only if it is for a descending point for $[C_{G-{\rm NF}(G,S_{G}),c_{G}}]$. 
\begin{Def}
\label{def:1}
The resulting cell complex is the {\it graph diagram for NF of $(G,S_G,c_G:G \rightarrow \mathbb{R})$} and denoted by ${\rm GDNF}(G,S_G,c_G)$.
As a specific class, if for each $[C_{G-{\rm NF}(G,S_{G}),c_{G}}]$, exactly one connected component of $G-S_G$ exists, then ${\rm GDNF}(G,S_G,c_G)$ is said to be {\it normal}.
\end{Def}
	{\it Reeb digraphs} are regarded to be specific cases of pre-digraphs. We redefine the {\it Reeb digraph} of $c:X \rightarrow Y$.
\begin{Def}
 \label{def:2}
The {\it Reeb digraph} of $c:X \rightarrow Y$ can be defined by $(R_c,S(c),\bar{c})$ if it is a pre-digraph.
\end{Def}

We can naturally define {\it isomorphisms} between pre-digraphs by homeomorphisms mapping important objects in the canonical way. By the existence of such isomorphisms we can define two isomorphic pre-digraphs. 

We have reviewed a method for representing $0$- or $1$-dimensional cell complexes which may not be homeomorphic to any graph, respecting \cite{kitazawa12}. This is essentially same as the original exposition, where some way of exposition is a bit different from the original one.
\begin{Def}
\label{def:3}
Let $\mathcal{C}$ denote the category such as the differentiable (smooth), real algebraic, real analytic, or topological one.
For a map $c:X \rightarrow Y$ in $\mathcal{C}$, let ${\phi}_X: X \rightarrow X_0$ and ${\phi}_Y:Y \rightarrow Y_0$ denote embeddings in $\mathcal{C}$ such that the image ${\phi}_X (X)$ is dense in a compact and connected space $X_0$ and that the image ${\phi}_Y (Y)$ is dense in a connected space $Y_0$. 
We also consider another category $\mathcal{C^{\prime}}$ obtained by a forgetful functor from $\mathcal{C}$.
A map $c_0:X_0 \rightarrow Y_0$ in the category ${\mathcal{C}}^{\prime}$ satisfying the relation $c_0 {\mid}_{\phi(X)}={\phi}_Y \circ c  \circ {{\phi}_X}^{-1} {\mid}_{\phi(X)}$ is a {\it compactification of} $c$ {\it in} $\mathcal{C^{\prime}}$. 
\end{Def}
This is a kind of fundamental arguments on topological spaces and continuous maps between them. This is also important in rational maps in algebraic (analytic) geometry (, where the author is not specialized in real algebraic geometry and studying algebraic geometry from fundamental notions and idea). Note that a map of this type explicitly appears in \cite{kitazawa12}. This is also closely related to Theorem \ref{thm:2}.

\begin{Def}
\label{def:4}
	For the Reeb digraph $(R_c,S(c),\bar{c})$ of $c:X \rightarrow Y$, if we can define a compactification $c_0:X_0 \rightarrow Y_0$ ($Y_0:=\mathbb{R}, S^1$) of $c$ in the topology category and have a pre-digraph $(R_{c_0},q_{c_0}({\phi}_X(S(c)) \bigcup (X_0-{\phi}_X(X)),\bar{c_0})$, then $(R_{c_0},q_{c_0}({\phi}_X(S(c)) \bigcup (X_0-{\phi}_X(X)),\bar{c_0})$ is said to be a {\it Reeb digraph compactification} or a {\it Reeb-D-C} of $c$. Let it be denoted by ${\rm ReebDC}(c,c_0,{\phi}_X,{\phi}_Y)$.
\end{Def}
Specific cases of this class are studied in \cite{kitazawa12}. %This is presented in the Subsection \ref{subsec:3.4}.
We introduce one of our main result.
\begin{Thm}
\label{thm:1}
For an integer $m>1$ and non-negative integers $n_{\rm d}$, $n_{\rm e}$ and $n_{\rm c}$, there exists an $m$-dimensional real analytic manifold $X_{m,n_{\rm d,e,c}} \subset {\mathbb{R}}^{m+1}$ and the following hold.
\begin{enumerate}
\item We have a Reeb-D-C of ${\pi}_{m+1,1} {\mid}_{X_{m,n_{\rm d,e,c}}}$.
\item Furthermore, we can have the Reeb-D-C of ${\pi}_{m+1,1} {\mid}_{X_{m,n_{\rm d,e,c}}}$ in such a way that
its graph diagram for NF is normal and with exactly one vertex and the following edges incident to it.
\begin{enumerate}
\item Exactly $n_{\rm d}$ edges departing from this whose closures in the whole {\rm (}$1$-dimensional{\rm )} space are homeomorphic to $\{t \mid t>0\}$.
\item Exactly $n_{\rm e}$ edges entering this whose closures in the whole {\rm (}$1$-dimensional{\rm )} space are homeomorphic to $\{t \mid t>0\}$.
\item Exactly $n_{\rm c}$ edges whose closures in the whole {\rm (}$1$-dimensional{\rm )} space are homeomorphic to $S^1$.
 \end{enumerate}
\end{enumerate}
\end{Thm}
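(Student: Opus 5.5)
We will construct $X_{m,n_{\rm d,e,c}}$ explicitly as a zero set in ${\mathbb{R}}^{m+1}$ and compactify ${\pi}_{m+1,1} {\mid}_{X_{m,n_{\rm d,e,c}}}$ by hand. The idea is that a non-finite point of the Reeb-D-C should come from a sequence of critical contours accumulating at infinity. Near such a point, the restriction of $\bar{c_0}$ to the closure of the accumulating edges attains a maximum (ascending) or minimum (descending) value there. We build one connected ``core'' piece $X_{\rm core}$, say a region diffeomorphic to a punctured sphere, whose Reeb space is a single finite connected graph. This piece becomes the unique equivalence class of connected components of $G-{\rm NF}(G,S_G)$, so the graph diagram for NF has exactly one vertex and is normal. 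We then attach $n_{\rm d}+n_{\rm e}+n_{\rm c}$ ``ends'' to $X_{\rm core}$. Each end is a real analytic tube going to infinity that carries infinitely many critical points, for instance by wrapping a periodic function such as $\sin$ into the defining equation. After compactification, each end contributes exactly one point of ${\rm NF}$.

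The ends will be realized concretely as follows. For a departing end (type (a)), take a tube along a ray in which the first coordinate $x_1$ tends to a finite value $a$ from below, for example $x_1=a-1/(1+t^2)$ in the tube parameter $t$. On this tube the height oscillates, e.g. through $\sin t$ scaled by a decaying factor, so there are infinitely many local maxima and minima with values increasing to $a$. In the compactification we adjoin a single point at infinity of the tube, and define $c_0$ there to be $a$. The closure of these edges in $R_{c_0}$ then has maximum value $a$ at the new point, so this point is ascending for the core class. The resulting $1$-cell departs from the vertex and its closure is a half-open arc, homeomorphic to $\{t \mid t>0\}$.

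A type (b) end is the same construction reflected, $x_1 \mapsto -x_1$. A type (c) end is a tube both of whose ends go to infinity, with oscillations accumulating to a maximum at one end and to a minimum at the other. Alternatively, two such ends of the core can be glued through the same adjoined point, producing a cell whose closure is $S^1$. In every case we take $Y_0=\mathbb{R}$, with ${\phi}_Y$ the identity. We take $X_0$ to be the one-point-per-end compactification, which is compact, connected, and contains $X$ densely. The continuity of $c_0$ follows from the convergence of $x_1$ along each end.

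For analyticity, we write $X$ as the zero set of a single real analytic function $e_{m,1}$. We use products or sums of squared terms localized along the tubes, with the tubes placed in pairwise disjoint directions, for example separated by $x_2$-ranges. We then check that $0$ is a regular value, choosing small generic perturbations if necessary. We also need that the critical points of $x_1$ on $X$ form a closed discrete set, and that the Reeb space of $X$ with $S(c)$ is a pre-digraph.

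The main obstacle is arranging a single global analytic equation so that all ends are simultaneously nonsingular and meet the core correctly. In particular, the core must remain one component of $G-{\rm NF}(G,S_G)$, rather than splitting into several classes. We must also verify the definition of ${\rm NF}$ exactly: the adjoined points are non-finite, and no other point is. Our first attempt would be a warped product form $f(x_1,x_2)+\sum_{j\ge3} x_j^2=0$, with $f$ a planar analytic function whose zero set has the required Reeb behavior. This reduces the problem to $m=2$, i.e. a curve in the plane suspended by spheres. It also ensures that contours are connected spheres or points.
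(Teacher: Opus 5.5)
Your Reeb-space bookkeeping is sound: one connected core makes the graph diagram for NF normal with one vertex, and each end contributes one accumulation point of critical contours, ascending or descending according to the side from which $c_0$ approaches the added value. The gap is exactly where you place the ``main obstacle'': you never produce the real analytic manifold, and the tools you suggest for it do not work.

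\begin{itemize}
\item Real analytic functions cannot be ``localized along the tubes''.
\item A defining function built as a sum of squares $\sum h_i^2$ has gradient $2\sum h_i\nabla h_i=0$ on its whole zero set, so it never gives a regular zero set.
\item Moving from level $0$ to a nearby ``generic'' value is not harmless on a non-compact set whose ends are arms of width tending to $0$. It can truncate or reshape precisely the ends you need.
\end{itemize}

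Your warped product $f+\sum_{j\ge 3}x_j^2=0$ is the right framework; it is the paper's $\prod_j f_j(x)-\sum_j y_j^2=0$ with $f=-\prod_j f_j$. But it only reduces the theorem to building the planar analytic region, which is the actual content. Also, your first version of type (c) fails: a tube with a maximum at one end and a minimum at the other, each end compactified separately, gives one departing and one entering half-open edge, not an edge with closure $S^1$. Only the pinched version works. It needs both ends to converge to the same value from opposite sides, and it makes $X_0$ a non-manifold.

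Your choice $Y_0=\mathbb{R}$, $\phi_Y={\rm id}$ makes the planar problem harder. Continuity of $c_0$ on compact $X_0$ forces $x_1$ to be bounded on $X$. So the region lies in a vertical strip, every arm is asymptotically vertical and narrows onto a line $x_1=a_i$, and for types (a) and (b) the arm must stay on one side of that line. You would have to realize $n_{\rm d}+n_{\rm e}+2n_{\rm c}$ such arms as a region bounded by finitely many mutually disjoint analytic curves. Disjointness is what makes $\prod_j f_j-\sum_j y_j^2$ regular on $X$. That requires U-shaped analytic curves with parallel vertical asymptotes and prescribed one-sided approach, and you give none of this.

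The paper avoids this by compactifying the target instead, with $\phi_Y(\mathbb{R})=S^1-\{(0,-1)\}$, so that ends may run off to $x_1\to\pm\infty$. The construction then goes as follows.
\begin{itemize}
\item The region is the star bounded by $l$ rotated components of a hyperbola. Its arms lie along the shared asymptotes, so $X\cong S^m$ minus $l$ points.
\item Infinitely many critical points at an end come from replacing selected components by graphs of $p_{{\rm h},t}+q_{p,s,1}$. Since $\limsup {q_{p,s,1}}'=+\infty$ and $\liminf {q_{p,s,1}}'=-\infty$ at $-\infty$, after any rotation the tangent is vertical infinitely often.
\item Whether a perturbed arm goes to $x_1\to+\infty$ or $x_1\to-\infty$ decides departing versus entering.
\item For the $n_{\rm c}$ loops, an end with $x_1\to-\infty$ is glued by a handle to one with $x_1\to+\infty$, so $X_0$ is $S^m$ or $\#_{n_{\rm c}}S^1\times S^{m-1}$. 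Both ends map to the single point $(0,-1)\in S^1$, so the added contour is both ascending and descending.
\end{itemize}
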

We can say that Theorem \ref{thm:1} is partially shown in \cite{kitazawa12} for specific cases presented shortly above and reviewed shortly in the Subsection \ref{subsec:3.4} and that this is shown in a different way and different story.

We also prove the following as another main result.
\begin{Thm}
\label{thm:2}
For an integer $m>1$, there exists a pair of an $m$-dimensional real analytic manifold $X_{m,1} \subset {\mathbb{R}}^{m+2}$ diffeomorphic to ${\mathbb{R}}^m$ and an $m$-dimensional smooth compact submanifold $X_{m,2} \subset {\mathbb{R}}^{m+2}$ diffeomorphic to $S^m$ and the following hold.
\begin{enumerate}
\item \label{thm:2.1} $X_{m,1}$ is diffeomorphic to ${\mathbb{R}}^m$. $X_{m,2}$ is diffeomorphic to $S^m$. 
\item \label{thm:2.2} The Reeb digraphs of the functions ${\pi}_{m+2,1} {\mid}_{X_{m,1}}$ and ${\pi}_{m+2,1} {\mid}_{X_{m,2}}$ are defined.
\item \label{thm:2.3} The function ${\pi}_{m+2,1} {\mid}_{X_{m,2}}$ is a compactification of  the function ${\pi}_{m+1,1} {\mid}_{X_{m,1}}$ in the smooth category.
\item \label{thm:2.4} The Reeb digraph of ${\pi}_{m+2,1} {\mid}_{X_{m,2}}$ is ${\rm ReebDC}({\pi}_{m+1,1} {\mid}_{X_{m,1}},{\pi}_{m+2,1} {\mid}_{X_{m,2}},{\phi}_X,{\phi}_Y)$ for some pair $({\phi}_X:X_{m,1} \rightarrow X_{m,2},{\phi}_Y:\mathbb{R} \rightarrow \mathbb{R})$ of an embedding ${\phi}_X$ and the identity map ${\phi}_Y:\mathbb{R} \rightarrow \mathbb{R}$. Here, in addition, ${\rm ReebDC}({\pi}_{m+1,1} {\mid}_{X_{m,1}},{\pi}_{m+2,1} {\mid}_{X_{m,2}},{\phi}_X,{\phi}_Y)$ and the Reeb digraph of ${\pi}_{m+1,1} {\mid}_{X_{m,1}}$ are not isomorphic.  
\end{enumerate}
\end{Thm}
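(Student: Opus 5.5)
The plan is to take a real analytic graph in ${\mathbb{R}}^{m+1}$ on which the height function is bounded, and to compactify it by one point into a round sphere sitting in an affine hyperplane of ${\mathbb{R}}^{m+2}$. Boundedness is forced: since ${\phi}_Y$ is the identity and $X_{m,2}$ is compact, the original function must have bounded image.

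\textbf{Construction.} Write $y=(x_2,\cdots,x_{m+1}) \in {\mathbb{R}}^m$ and define
\[
X_{m,1}:=\{(x_1,y) \in {\mathbb{R}}^{m+1} \mid x_1(1+|y|^2)-1=0\},
\]
regarded also inside ${\mathbb{R}}^{m+2}$ via ${\mathbb{R}}^{m+1}\times\{0\}$. The partial derivative of the defining polynomial in $x_1$ is $1+|y|^2 \neq 0$, so the defining polynomial has no singular point on its zero set. Hence $X_{m,1}$ is a real (even algebraic) manifold, and it is the graph of $g(y)=1/(1+|y|^2)$, so it is diffeomorphic to ${\mathbb{R}}^m$. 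Consequently ${\pi}_{m+1,1} {\mid}_{X_{m,1}}$ is identified with $g$. This function takes values in $(0,1]$, has a unique critical point (a nondegenerate maximum at $y=0$), and its level sets $g^{-1}(t)$ for $0<t<1$ are spheres $S^{m-1}$, which are connected since $m>1$.

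\textbf{Reeb digraph of $X_{m,1}$.} It follows that $R_{g}$ is homeomorphic to $(0,1]$, with exactly one vertex at $1$ and one edge entering it. So the Reeb digraph is defined, which gives the first half of (\ref{thm:2.2}).

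\textbf{Compactification.} Use the inverse stereographic projection and set
\[
{\phi}_X(x_1,y):=\left(x_1,\frac{2y}{1+|y|^2},\frac{|y|^2-1}{|y|^2+1}\right) \in {\mathbb{R}}\times{\mathbb{R}}^{m}\times{\mathbb{R}}={\mathbb{R}}^{m+2}.
\]
On $X_{m,1}$ we have $x_1=g(y)=(1-x_{m+2})/2$. Therefore ${\phi}_X(X_{m,1})$ is the unit sphere in the last $m+1$ coordinates minus the point $p_\infty:=(0,0,\dots,0,1)$, placed in the affine hyperplane $\{x_1=(1-x_{m+2})/2\}$. Define $X_{m,2}$ as the closure, i.e. this whole sphere; it is a smooth compact submanifold diffeomorphic to $S^m$. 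The map ${\phi}_X$ is a smooth embedding with dense image, so (\ref{thm:2.1}) holds. Moreover ${\pi}_{m+2,1}\circ{\phi}_X={\pi}_{m+1,1}{\mid}_{X_{m,1}}$, so with ${\phi}_Y=\mathrm{id}$ the function ${\pi}_{m+2,1}{\mid}_{X_{m,2}}$ is a compactification in the smooth category, giving (\ref{thm:2.3}).

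\textbf{Reeb digraph of $X_{m,2}$.} Since ${\pi}_{m+2,1}{\mid}_{X_{m,2}}=(1-x_{m+2})/2$ restricted to a round sphere, it is the standard height function. It is Morse with exactly two critical points: the minimum $0$ at $p_\infty$ and the maximum $1$ at ${\phi}_X(1,0)$. Its Reeb digraph is therefore a closed segment $[0,1]$ with two vertices and one edge from $0$ to $1$, which completes (\ref{thm:2.2}).

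\textbf{Identification with the Reeb-D-C and non-isomorphism.} For (\ref{thm:2.4}), the vertex set $q_{c_0}({\phi}_X(S(c))\cup\{p_\infty\})$ in Definition \ref{def:4} consists exactly of the two endpoints. These are the critical points of the compactified function, so the Reeb-D-C coincides with the Reeb digraph of ${\pi}_{m+2,1}{\mid}_{X_{m,2}}$. It is not isomorphic to the Reeb digraph of ${\pi}_{m+1,1}{\mid}_{X_{m,1}}$, because $[0,1]$ is compact and $(0,1]$ is not, so they are not even homeomorphic; the vertex counts (two versus one) also differ.

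\textbf{Main obstacle.} The only delicate point is checking that the embedding ${\phi}_X$ and $X_{m,2}$ meet the conventions of Definitions \ref{def:3} and \ref{def:4}. Concretely, one must confirm that adding the single point $p_\infty$ yields a pre-digraph in the paper's sense, i.e. that $G-S_G$ is a single open $1$-cell on which $\bar{c_0}$ is injective. This holds here by the explicit level-set description above.
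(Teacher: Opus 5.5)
Your argument is correct for the statement as worded, but it takes a very different and much more elementary route than the paper. You compactify the bounded Morse function $1/(1+|y|^2)$ on ${\mathbb{R}}^m$ by one point via inverse stereographic projection. The two Reeb digraphs are then $(0,1]$ and $[0,1]$, distinguished by compactness alone.

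The paper instead builds both manifolds with its $X_{m_1,m_2,\{{S_j}^{\prime},g_j\},I}$ construction (Propositions \ref{prop:3} and \ref{prop:4}). For $X_{m,1}$, one boundary curve is $x_1=e^{-{x_2}^2}{\sin}^2 x_2$, which touches $\{x_1=0\}$ at infinitely many points escaping to infinity. So the Reeb digraph is an infinite, locally finite graph with infinitely many minima at level $0$. For $X_{m,2}$, this curve is replaced by the flat ``inverted'' curve $x_1=e^{-1/{x_2}^2}{\sin}^2(1/x_2)$ together with an ellipse. All those minima then accumulate at the single added point $0$, and the Reeb-D-C is a Peano continuum with exactly one non-finite point $q(0)$.

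What your route buys is a short, fully checkable proof, with $X_{m,2}$ even real algebraic and ${\phi}_X$ real analytic. It also shows that the non-isomorphism clause, as phrased, can be witnessed by compactness alone. What it does not give is the phenomenon the theorem is meant to illustrate in connection with Problems \ref{prob:1} and \ref{prob:2}: a Reeb-D-C that is not homeomorphic to any finite graph. That phenomenon is also why the paper's $X_{m,2}$ is only smooth, since infinitely many critical points accumulating at one point force a flat, non-analytic defining function.

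The paper's route produces that richer example. The cost is that several verifications are left only sketched, notably the existence of an $x_1$-preserving embedding ${\phi}_X$ between two rather different planar regions.

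One small fix in your write-up: to meet the paper's definition of a real analytic manifold in ${\mathbb{R}}^{m+2}$, use the map $(x_1(1+|y|^2)-1,\,x_{m+2})$. Its differential clearly has rank $2$ on its zero set.
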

\section{Fundamental and important methods in our proof of our main result.}
\label{sec:3}
\subsection{Affine transformations.}
\label{subsec:3.1}
Here, we review affine transformations on the real affine space ${\mathbb{R}}^k$. An {\it affine transformation} there means a map obtained by the composition of finitely many diffeomorphisms each of which is a linear transformation or a parallel transformation there. For linear transformations, rotations around the origin $0 \in {\mathbb{R}}^k$ is important, for example.

As a simplest example, an {\it ellipsoid} means a subset of ${\mathbb{R}}^2$ obtained as the image of the restriction to $D^2 \subset {\mathbb{R}}^2$ of an affine transformation on ${\mathbb{R}}^2$ and its boundary is a real algebraic manifold of dimension $1$ and diffeomorphic to $S^1$. 

Related to this, we define a {\it hyperbola} and its component.
\subsection{A hyperbola and its component.}
\label{subsec:3.2}
A {\it component} of a {\it hyperbola} is a $1$-dimensional real algebraic manifold in ${\mathbb{R}}^2$, defined as the graph $\{(x,\sqrt{x^2+1}) \mid x \in \mathbb{R}\} \subset {\mathbb{R}}^2$.
Related to the asymptotic behavior, the lines $\{(x_1,x_2) \mid x_1 \pm x_2=0\}$ are {\it asymptotes} of this. The 1st derivative of the function $p_{\rm h}(x):\mathbb{R} \rightarrow \mathbb{R}$, defined by $p_{\rm h}(x):=\sqrt{x^2+1}$, is ${p_{\rm h}}^{\prime}(x)=\frac{x}{\sqrt{x^2+1}}$.
A real algebraic manifold in ${\mathbb{R}}^2$ obtained as the image of the restriction there of an affine transformation on ${\mathbb{R}}^2$ can be also defined to be a {\it component of a hyperbola} and its {\it asymptote} is also defined similarly. It is regarded to be a real algebraic manifold in ${\mathbb{R}}^2$ obtained as the image of the restriction to $\{(x_1,x_2) \in {\mathbb{R}}^2 \mid {x_1}^2-{x_2}^2=1, x_2>0\}$ of an affine transformation on ${\mathbb{R}}^2$.
\subsection{The asymptotic behavior of real valued functions $q_{p,s,i}:\mathbb{R} \rightarrow \mathbb{R}$ and their 1st derivatives $q_{p,s,i}^{\prime}$.}
\label{subsec:3.3}
We consider a function of the form $q_{p,s,1}(x):=\frac{{p_1(x)}^{s_1}}{{p_2(x)}^{s_2}}+\frac{\sin{(e^{-x})}{p_1(x)}^{s_1}}{2{p_2(x)}^{s_2}}$, $q_{p,s,2}(x):=\frac{{p_1(x)}^{s_1}}{{p_2(x)}^{s_2}}+\frac{\sin{(e^{x})}{p_1(x)}^{s_1}}{2{p_2(x)}^{s_2}}$ and $q_{p,s,3}(x):=\frac{{p_1(x)}^{s_1}}{{p_2(x)}^{s_2}}+\frac{\sin{(e^{x^2})}{p_1(x)}^{s_1}}{2{p_2(x)}^{s_2}}$ with the following.
\begin{itemize}
\item  $p_1(x)$ and $p_2(x)$ are real polynomials being always positive for $x \in \mathbb{R}$.
\item  $s_1, s_2>0$, and $\frac{{p_1(x)}^{s_1}}{{p_2(x)}^{s_2}}$ converges to $0$ as $x$ diverges to $\pm \infty$.
\end{itemize}

We can also see the following.
\begin{itemize}
\item The value $q_{p,s,i}(x)$ is positive for any $x \in \mathbb{R}$.
\item $q_{p,s,i}(x)$ converges to $0$ as $x$ diverges to $\pm \infty$.
\item The following are on asymptotic behaviors of 1st derivatives ${q_{p,s,i}}^{\prime}$. In \cite{kitazawa11}, the author has discussed this explicitly first for our studies on topological properties and combinatorial ones of Reeb spaces. We also discuss this in \cite{kitazawa12}. We consider the orders of the divergence at $\pm \infty$. We can check in a self-contained way and we refer to the original preprints for the understanding. 
We consider the {\it limit superiors} ${\limsup}_{x \to \pm \infty} c(x)$ and the {\it limit inferiors} ${\liminf}_{x \to \pm \infty} \sup c(x)$ for a real-valued function $c:\mathbb{R} \rightarrow \mathbb{R}$. 
\begin{itemize}
\item ${\limsup}_{x \to -\infty} {q_{p,s,1}}^{\prime}(x)=+\infty$  and  ${\liminf}_{x \to -\infty} {q_{p,s,1}}^{\prime}(x)=-\infty$. 

${\limsup}_{x \to +\infty} {q_{p,s,1}}^{\prime}(x)=0$  and  ${\liminf}_{x \to +\infty} {q_{p,s,1}}^{\prime}(x)=0$.
\item ${\limsup}_{x \to +\infty} {q_{p,s,2}}^{\prime}(x)=+\infty$  and  ${\liminf}_{x \to +\infty} {q_{p,s,2}}^{\prime}(x)=-\infty$.

 ${\limsup}_{x \to -\infty} {q_{p,s,2}}^{\prime}(x)=0$  and  ${\liminf}_{x \to -\infty} {q_{p,s,2}}^{\prime}(x)=0$.
\item ${\limsup}_{x \to \pm \infty} {q_{p,s,3}}^{\prime}(x)=+\infty$  and  ${\liminf}_{x \to \pm \infty} {q_{p,s,3}}^{\prime}(x)=-\infty$.
\end{itemize}
\end{itemize}
In this paper, we consider $p_{{\rm h},t}(x):=\sqrt{1+{(\tan  t)}^2 x^2}$ ($0<t<\frac{\pi}{2}$) and  $p_{{\rm h},t,q_{p,s,i}}(x):=\sqrt{1+\tan  t x^2}+q_{p,s,i}$, real-valued real analytic functions on $\mathbb{R}$. We put $0<t<\frac{\pi}{2}$ as a sufficiently large number such that $\frac{\pi}{2}$ is divisible by $2(\frac{\pi}{2}-t)$ in this paper. Note that the two asymptotes of the component of the hyperbola form an angle of size $2(\frac{\pi}{2}-t)$, around this. This is important in proving Theorem \ref{thm:1}. 
\subsection{Important smooth manifolds and maps.}
\label{subsec:3.4}

The canonical projection of the unit sphere $S^m$ is ${\pi}_{m+1,n} {\mid}_{S^m}$ and generalizes the natural height of $S^m$. Its image is the $n$-dimensional unit disk $D^n \subset {\mathbb{R}}^n$. We review a kind of construction generalizing the canonical projections of $S^m$, respecting the article \cite{kitazawa3} of the author and some preprints of the author related to this.

For $l>0$ ($n-1$)-dimensional smooth manifolds $S_j \subset {\mathbb{R}}^{n}$ with $n \geq 2$ which are also the zero sets of the smooth functions $f_j:{\mathbb{R}}^{n} \rightarrow \mathbb{R}$ and which are mutually disjoint, we also assume the relations $D_{\{S_j,f_j\}_{j=1}^l}:=\{x \in {\mathbb{R}}^n \mid f_j(x)>0, 1 \leq j \leq l\}$ and $\overline{D_{\{S_j,f_j\}_{j=1}^l}}^{{\mathbb{R}}^n}-D_{\{S_j,f_j\}_{j=1}^l}:={\sqcup}_{j=1}^l S_j$.
We can discuss as follows. Let $m \geq n$ be an integer. We define

$$X_{m,\{S_j,f_j\}_{j=1}^l}:=\{(x,y)=(x,y_1,\cdots y_{m-n+1}) \in {\mathbb{R}}^n \times {\mathbb{R}}^{m-n+1} \mid {\prod}_{j=1}^l (f_j(x))-{\Sigma}_{j=1}^{m-n+1} {y_j}^2=0 \}$$ and by implicit function theorem, it is an $m$-dimensional smooth submanifold of ${\mathbb{R}}^{m+1}$ with no boundary. More precisely, the value of the partial derivative of ${\prod}_{j=1}^l (f_j(x))-{\Sigma}_{j=1}^{m-n+1} {y_j}^2$ by some $y_j$ is not $0$, in the case $x \in D_{\{S_j,f_j\}_{j=1}^l}$. That of the function by some $x_b$ in $x=(x_1,\cdots x_n)$ is the product of the numbers $f_j(x) \neq 0$ except $j \neq a$ and the value of the 1st derivative of $f_a$ at $x$ in the case $x \in S_a$ for some $1 \leq a \leq l$, and the resulting value is not $0$. For this, see \cite{kitazawa4}, where the real algebraic or polynomial case is considered and this is a kind of pioneering studies on real algebraic construction of functions and maps. Consult also the preprint \cite{kitazawa6} of the author. We do not assume knowledge or arguments of \cite{kitazawa4, kitazawa6}. It is no problem.

In \cite{kitazawa8} (\cite[Theorem 1]{kitazawa8}) and the studies of the author \cite{kitazawa9, kitazawa10, kitazawa11, kitazawa12}, the case $(l,n)=(2,2)$ with $S_j$ being the graphs $\{(c_j(x),x) \mid x \in \mathbb{R}\}$ of the smooth or real analytic real-valued functions $c_j:\mathbb{R} \rightarrow \mathbb{R}$ with $c_1(x)<c_2(x)$ for $x \in \mathbb{R}$. The author has investigated topologies of Reeb spaces $R_{{\pi}_{m+1,1} {\mid}_{X_{m,\{S_j,f_j\}_{j=1}^l}}}$ exclusively for this specific case. In this specific case, $X_{m,\{S_j,f_j\}_{j=1}^l}$ is diffeomorphic to the product $S^{m-1} \times \mathbb{R}$ and we also consider a compactification of the real-valued function $c:={\pi}_{m+1,1} {\mid}_{X_{m,\{S_j,f_j\}_{j=1}^l}}:X:=X_{m,\{S_j,f_j\}_{j=1}^l} \rightarrow Y:=\mathbb{R}$ in the topology category. More precisely, the manifold of the domain of the resulting function $c_0$ is $X_0:=S^m$ with $\{(0,\cdots \pm 1)\} \subset {\mathbb{R}}^{m+1}$ being added to (the embedded image ${\phi}_X (X)$ in $X_0:=S^m$ of) $X_{m,\{S_j,f_j\}_{j=1}^l}$, the space of the target is $Y_0:=\mathbb{R}$, and ${\phi}_Y$ is the identity map, in Definition \ref{def:2}. As mentioned shortly around Theorem \ref{thm:1}, Theorem \ref{thm:1} is discussed partially in such situations. 

We consider the presented general case, in the present paper.

For example, Proposition \ref{prop:2} is of fundamental propositions. It is also essentially proven in the existing preprints of the author and can be also shown easily.
\begin{Prop}
\label{prop:2}
 A point of $X_{m,\{S_j,f_j\}_{j=1}^l}$ is a critical point of ${\pi}_{m+1,1} {\mid}_{X_{m,\{S_j,f_j\}_{j=1}^l}}$ if and only if it is a point mapped to a point $p$ of some $S_j$ whose tangent vector of $S_j$ at $p$ is always a tangent vector of $T_p (\{{\pi}_{n,1}(p)\} \times {\mathbb{R}}^{n-1})$ with $\{{\pi}_{n,1}(p)\} \times {\mathbb{R}}^{n-1} \subset {\mathbb{R}}^n$. Furthermore, to each of such points and each point of $S_j$, exactly one point of $X_{m,\{S_j,f_j\}_{j=1}^l}$ is mapped by ${\pi}_{m+1,n}$. 
\end{Prop}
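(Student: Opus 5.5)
We prove Proposition \ref{prop:2} by writing $F(x,y):={\prod}_{j=1}^l f_j(x)-{\Sigma}_{j=1}^{m-n+1} {y_j}^2$, so that $X:=X_{m,\{S_j,f_j\}_{j=1}^l}=F^{-1}(0)$. By the argument of Subsection \ref{subsec:3.4}, $dF$ never vanishes on $X$. Hence a point $(x,y) \in X$ is a critical point of ${\pi}_{m+1,1} {\mid}_{X}$ if and only if $dF_{(x,y)}$ is a nonzero multiple of $dx_1$. In other words, we need $\frac{\partial F}{\partial y_j}(x,y)=-2y_j=0$ for every $j$, and $\frac{\partial F}{\partial x_b}(x,y)=0$ for every $b \geq 2$. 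We carry out the proof in this order: first the $y$-partials, then the $x$-partials, and finally the fiber statement.

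The $y$-part forces $y=0$, and then $F(x,0)=0$ gives ${\prod}_j f_j(x)=0$. Since the $S_j$ are mutually disjoint, $x \in S_a$ for exactly one $a$, and $f_j(x) \neq 0$ for $j \neq a$. We must also check that points of $X$ satisfy ${\prod}_j f_j(x) \geq 0$. Points with $x$ outside $\overline{D}$ would give a negative product, so $X$ projects into $\overline{D_{\{S_j,f_j\}_{j=1}^l}}^{{\mathbb{R}}^n}=D \sqcup {\sqcup}_j S_j$, as implicitly assumed in the construction. This is the first point where the standing hypotheses of Subsection \ref{subsec:3.4} are used. On $S_a$ we have $\frac{\partial F}{\partial x_b}(x,0)=\bigl({\prod}_{j \neq a} f_j(x)\bigr)\frac{\partial f_a}{\partial x_b}(x)$, and the prefactor is nonzero. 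So the criticality condition becomes: $\frac{\partial f_a}{\partial x_b}(x)=0$ for all $b \geq 2$, while $df_a(x) \neq 0$.

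Because $S_a=f_a^{-1}(0)$ is regular, we have $T_xS_a=\ker df_a(x)$. The condition above says that $df_a(x)$ is a nonzero multiple of $dx_1$, which is equivalent to $\ker df_a(x)=\{v \mid v_1=0\}=T_x(\{{\pi}_{n,1}(x)\}\times{\mathbb{R}}^{n-1})$. Both spaces have dimension $n-1$, so this is exactly the stated tangency condition.

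For the last assertion, fix $p \in S_j$. The points of $X$ over $p$ satisfy ${\Sigma}_j {y_j}^2={\prod}_j f_j(p)=0$, so $y=0$ and the preimage ${\pi}_{m+1,n}^{-1}(p) \cap X$ is the single point $(p,0)$. Every critical point found above lies over some $S_a$, so it is this unique point.

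The main obstacle is the sign issue in the second step. Nonvanishing of $dF$ and the description of $X$ rely on ${\prod}_j f_j \geq 0$ exactly on $\overline{D}$ and on the regularity of each $f_a$ along $S_a$. I would state these explicitly as part of the standing assumptions of Subsection \ref{subsec:3.4}, which is where the paper invokes the implicit function theorem.
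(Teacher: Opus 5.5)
Your proof is correct and takes essentially the paper's route. The paper gives no separate proof, saying only that the proposition ``can be also shown easily''; your argument is a clean version of the partial-derivative computation it sketches in Subsection~\ref{subsec:3.4}, organized via the criterion that $(x,y)$ is critical iff $dF_{(x,y)}$ is a nonzero multiple of $dx_1$.

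The sign issue you flag as the main obstacle is never actually used, and you assert it rather than prove it. You get $dF\neq 0$ on $X$ from $\partial F/\partial y_j=-2y_j$ when $y\neq 0$, and from $df_a\neq 0$ on $S_a$ (with $f_j\neq 0$ there for $j\neq a$) when $y=0$. Criticality already forces $y=0$, so the sign of $\prod_j f_j$ outside $\overline{D}$ never enters and that step can simply be dropped.
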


We consider another case. For this, we respect arguments first presented in \cite{kitazawa6} and followed by \cite{kitazawa8}, and so on: we do not assume related non-trivial knowledge. Given $l>0$ $1$-dimensional real analytic manifolds ${S_j}^{\prime} \subset {\mathbb{R}}^{2}$ which are also the zero sets of the real analytic functions $g_{j}:{\mathbb{R}}^{2} \rightarrow \mathbb{R}$. 
\begin{enumerate}[label={\rm(}\text{\rm P3-}\arabic*{\rm )}]	
	\item \label{P3-1} (Defining the region $D_{\{{S_j}^{\prime},g_j\}_{j=1}^l} \subset {\mathbb{R}}^2$.)
	 $D_{\{{S_j}^{\prime},g_j\}_{j=1}^l}:=\{x \in {\mathbb{R}}^2 \mid g_j(x)>0, 1 \leq j \leq l\}$.
	\item \label{P3-2} (Each ${S_j}^{\prime}$ is regarded as the intersection of ${\overline{D_{\{{S_j}^{\prime},g_j\}_{j=1}^l}}}^{{\mathbb{R}}^2}$ and the zero set of $g_j$.) ${S_j}^{\prime}={\overline{D_{\{{S_j}^{\prime},g_j\}_{j=1}^l}}}^{{\mathbb{R}}^2} \bigcap \{x \mid g_j(x)=0\}$.
	\item \label{P3-3} (Transversality of ${S_{j_1}}^{\prime} \ni p$ and ${S_{j_2}}^{\prime} \ni p$ at $p \in {\overline{D_{\{{S_j}^{\prime},g_j\}_{j=1}^l}}}^{{\mathbb{R}}^2}-D_{\{{S_j}^{\prime},g_j\}_{j=1}^l}$.)
	Each point of ${\overline{D_{\{{S_j}^{\prime},g_j\}_{j=1}^l}}}^{{\mathbb{R}}^2}-D_{\{{S_j}^{\prime},g_j\}_{j=1}^l}$ is contained in at most two manifolds ${S_j}^{\prime}$. Furthermore, for a point $p$ there contained in such two distinct manifolds ${S_{j_1}}^{\prime} \ni p$ and ${S_{j_2}}^{\prime} \ni p$, the non-zero tangent vector of ${S_{j_1}}^{\prime}$ there and the non-zero tangent vector of ${S_{j_2}}^{\prime}$ there are independent. This shows so-called transversality of intersection of  ${S_{j_1}}^{\prime} \ni p$ and ${S_{j_2}}^{\prime} \ni p$ at $p$.
	\item \label{P3-4} (For data for reconstructing a real analytic map onto ${\overline{D_{\{{S_j}^{\prime},g_j\}_{j=1}^l}}}^{{\mathbb{R}}^2}$.)
	 For each manifold ${S_j}^{\prime} \subset {\mathbb{R}}^{2}$ such that ${\overline{D_{\{{S_j}^{\prime},g_j\}_{j=1}^l}}}^{{\mathbb{R}}^2} \bigcap {S_{j}}^{\prime}$ is non-empty, either $1$ or $2$ is assigned and for the two manifolds ${S_{j_1}}^{\prime}$ and ${S_{j_2}}^{\prime}$ intersecting as above, distinct numbers $I_{{S_{j_1}}^{\prime}} \in \{1,2\}$ and  $I_{{S_{j_2}}^{\prime}} \in \{1,2\}$ are assigned.  For each manifold ${S_j}^{\prime} \subset {\mathbb{R}}^{2}$ such that ${\overline{D_{\{{S_j}^{\prime},g_j\}_{j=1}^l}}}^{{\mathbb{R}}^2} \bigcap {S_{j}}^{\prime}$ is empty,  $I_{{S_{j}}^{\prime}}=0$ is assigned.
\end{enumerate}

Let $m_1, m_2 \geq 1$ be integers.

We define \\

$X_{m_1,m_2,\{{S_j}^{\prime},g_j\}_{j=1}^l,I}:\\=\{(x,y_1,y_2)=(x,(y_{1,1} \ldots y_{1,m_1}),(y_{2,1} \ldots y_{2,m_2})) \\ \in {\mathbb{R}}^2 \times {\mathbb{R}}^{m_1} \times {\mathbb{R}}^{m_2} \mid {\prod}_{j \in \{I_{{S_j}^{\prime}}=i\}} (g_j(x))-{\Sigma}_{j=1}^{m_i} {y_{i,j}}^2=0\ \text{for}\ i=1,2\}$\\ and by implicit function theorem, we have Proposition \ref{prop:3}.
\begin{Prop}
\label{prop:3}
$X_{m_1,m_2,\{{S_j}^{\prime},g_j\}_{j=1}^l,I}$ is an {\rm (}$m_1+m_2${\rm )}-dimensional real analytic manifold in ${\mathbb{R}}^{m_1+m_2+2}$.
\end{Prop}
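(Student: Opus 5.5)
The plan is to apply the implicit function theorem (or its real analytic version) to the real analytic map
\[
F:=(F_1,F_2):{\mathbb{R}}^{2} \times {\mathbb{R}}^{m_1} \times {\mathbb{R}}^{m_2} \rightarrow {\mathbb{R}}^2, \qquad F_i(x,y_1,y_2):={\prod}_{j \in \{I_{{S_j}^{\prime}}=i\}} g_j(x)-{\Sigma}_{k=1}^{m_i} {y_{i,k}}^2 .
\]
The set $X_{m_1,m_2,\{{S_j}^{\prime},g_j\}_{j=1}^l,I}$ is the zero set of $F$, intersected with the requirement that $x \in {\overline{D_{\{{S_j}^{\prime},g_j\}_{j=1}^l}}}^{{\mathbb{R}}^2}$; this is implicit in the construction and I would state it explicitly, taking the union of the appropriate connected components. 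It then suffices to show that the $2 \times (m_1+m_2+2)$ Jacobian of $F$ has rank $2$ at every point $(x,y_1,y_2)$ of this set. The conclusion is that the set is an $(m_1+m_2)$-dimensional real analytic submanifold, exactly in the sense of the definition of real analytic manifolds given in the introduction with $e_{m,k}:=F$.

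The rank check splits into cases according to where $x$ lies, using \ref{P3-1}--\ref{P3-3}. Note that $\partial F_1$ involves the $y_2$-variables trivially and $\partial F_2$ involves the $y_1$-variables trivially, since $\partial F_i/\partial y_{i,k}=-2y_{i,k}$.

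\textbf{Case $x \in D_{\{{S_j}^{\prime},g_j\}_{j=1}^l}$.} Both products are positive, so $y_1 \neq 0$ and $y_2 \neq 0$. Pick $k_1,k_2$ with $y_{1,k_1} \neq 0$ and $y_{2,k_2} \neq 0$. The minor in the columns $y_{1,k_1}, y_{2,k_2}$ is diagonal with nonzero entries.

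\textbf{Case $x$ in exactly one ${S_a}^{\prime}$, say with $I_{{S_a}^{\prime}}=1$.} Then $F_2$ is positive at $x$, so some $y_{2,k_2} \neq 0$. Also $y_1=0$, and $\partial F_1/\partial x_b = \partial g_a/\partial x_b \cdot {\prod}_{j \neq a, I_{{S_j}^{\prime}}=1} g_j(x)$. The product is nonzero because of \ref{P3-2} and because $x$ lies in no other ${S_j}^{\prime}$. The gradient of $g_a$ is nonzero, since ${S_a}^{\prime}$ is a $1$-dimensional real analytic manifold given as the regular zero set of $g_a$. So some $\partial F_1/\partial x_b \neq 0$. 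In the columns $x_b, y_{2,k_2}$ the matrix is triangular, because $\partial F_1/\partial y_{2,k_2}=0$, with nonzero diagonal.

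\textbf{Case $x \in {S_{j_1}}^{\prime} \cap {S_{j_2}}^{\prime}$ (at most two by \ref{P3-3}).} By \ref{P3-4} the two curves carry the labels $1$ and $2$, so $y_1=y_2=0$. The $y$-columns vanish, and the $x$-part of the Jacobian is the $2 \times 2$ matrix with rows $c_1 \nabla g_{j_1}(x)$ and $c_2 \nabla g_{j_2}(x)$, where $c_1,c_2 \neq 0$ are the products of the remaining factors. Transversality in \ref{P3-3} says the tangent vectors are independent, hence so are the gradients, so this matrix is invertible.

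Curves with $I_{{S_j}^{\prime}}=0$ do not meet the closure and play no role.

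I expect the main obstacle to be bookkeeping rather than analysis. The facts that the extra factors $c_i$ do not vanish and that the gradients $\nabla g_j$ are nonzero on ${S_j}^{\prime}$ both rely on reading \ref{P3-2} together with the standing assumption that ${S_j}^{\prime}$ is a regular zero set. The other delicate point is making precise that we restrict to $x$ in the closure of $D_{\{{S_j}^{\prime},g_j\}_{j=1}^l}$, which is open and closed in the full zero set of $F$ because the products are positive there. With the rank established everywhere, the real analytic implicit function theorem gives the claim.
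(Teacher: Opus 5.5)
Your proposal is correct and matches the paper's own proof. The paper also shows that the differential of $(e_{\{{S_j}^{\prime},g_j\},1},e_{\{{S_j}^{\prime},g_j\},2})$ has rank $2$ at every point, using exactly your three cases: $x_0\in D_{\{{S_j}^{\prime},g_j\}_{j=1}^l}$, $x_0$ on exactly one ${S_i}^{\prime}$, and $x_0$ on two transversal curves carrying distinct labels. Two points the paper leaves implicit are made precise in your version: the restriction to $x$ in the closure of $D_{\{{S_j}^{\prime},g_j\}_{j=1}^l}$, taken as a union of components of the zero set, and the use of $\nabla g_j\neq 0$ on ${S_j}^{\prime}$.
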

\begin{proof}

More precisely, we investigate the differential of the map 
$e_{\{{S_j}^{\prime},g_j\}}:=(e_{\{{S_j}^{\prime},g_j\},1},e_{\{{S_j}^{\prime},g_j\},2}):{\mathbb{R}}^{m_1+m_2+2} \rightarrow {\mathbb{R}}^2$ defined by
$e_{\{{S_j}^{\prime},g_j\}}(x,y_1,y_2):=(e_{\{{S_j}^{\prime},g_j\},1}(x,y_1,y_2),e_{\{{S_j}^{\prime},g_j\},2}(x,y_1,y_2)):=({\prod}_{j \in \{I_{{S_j}^{\prime}}=1\}} (g_j(x))-{\Sigma}_{j=1}^{m_1} {y_{1,j}}^2,{\prod}_{j \in \{I_{{S_j}^{\prime}}=2\}} (g_j(x))-{\Sigma}_{j=1}^{m_2} {y_{2,j}}^2)$. We prove that at each point $(x_{0},y_{1,0},y_{2,0}) \in X_{m_1,m_2,\{{S_j}^{\prime},g_j\}_{j=1}^l}$ the differential is of rank $2$.\\
\ \\
Case P3-1 $x_{0} \in D_{\{{S_j}^{\prime},g_j\}_{j=1}^l}$. \\
The values of partial derivatives $\frac{\partial e_{\{{S_j}^{\prime},g_j\},1}}{\partial y_{2,j}}$ and $\frac{\partial e_{\{{S_j}^{\prime},g_j\},2}}{\partial y_{1,j}}$ are always $0$. The values of some partial derivative of the form $\frac{\partial e_{\{{S_j}^{\prime},g_j\},1}}{\partial y_{1,j}}$ and one of the form $\frac{\partial e_{\{{S_j}^{\prime},g_j\},2}}{\partial y_{2,j}}$ are non-zero. We can see that at the point the rank of the differential is $2$.\\

\noindent Case P3-2 $x_{0}$ is in exactly one ${S_{i}}^{\prime}$ with $I_{{S_i}^{\prime}}=i_0$. \\
The value of a partial derivative of the form $\frac{\partial e_{\{{S_j}^{\prime},g_j\},i_0}}{\partial y_{i,j}}$ ($i=1,2$) is always $0$. The value of some partial derivative of the form $\frac{\partial e_{\{{S_j}^{\prime},g_j\},{i_0}^{\prime}}}{\partial y_{{i_0}^{\prime},j}}$ (${i_0}^{\prime} \neq i_0$) is non-zero. The value of some partial derivative of the form $\frac{\partial e_{\{{S_j}^{\prime},g_j\},i_0}}{\partial x_j}$ is non-zero, by \ref{P3-1} and \ref{P3-2}. We can see that at the point the rank of the differential is $2$.\\

\noindent Case P3-3 $x_{0}$ is in two distinct ${S_{i_1}}^{\prime}$ and ${S_{i_2}}^{\prime}$. \\
The value of a partial derivative $\frac{\partial e_{\{{S_j}^{\prime},g_j\},i_a}}{\partial y_{i_b,j}}$ is always $0$. The value of some partial derivative of the form $\frac{\partial e_{\{{S_j}^{\prime},g_j\},1}}{\partial x_i}$ is non-zero. The value of some partial derivative of the form $\frac{\partial e_{\{{S_j}^{\prime},g_j\},2}}{\partial x_i}$ is non-zero. 
At the point, as assumed in \ref{P3-3}, ${S_{i_1}}^{\prime}$ and ${S_{i_2}}^{\prime}$ intersect satisfying the transversality. Remember also \ref{P3-4}. We can see that at the point the rank of the differential is $2$. \\
\ \\
From Cases P3-1, P3-2 and P3-3, the rank of the differential of $e_{\{{S_j}^{\prime},g_j\}}$ is $2$ at each point $(x_{0},y_{1,0},y_{2,0}) \in X_{m_1,m_2,\{{S_j}^{\prime},g_j\}_{j=1}^l}$. This completes the proof. 
\end{proof}
Proposition \ref{prop:4} can be shown immediately, by observing (local) structures of the functions and maps.
\begin{Prop}
	\label{prop:4}
	A point of $X_{m_1, m_2,\{{S_j}^{\prime},g_j\}_{j=1}^l,I}$ is a critical point of ${\pi}_{m_1+m_2+2,1} {\mid}_{X_{m_1,m_2,\{{S_j}^{\prime},g_j\}_{j=1}^l,I}}$ if and only if it is a point satisfying at least one of the following hold.
		\begin{itemize}
	\item It is mapped to a point $p$ of some ${S_j}^{\prime}$ whose tangent vector of ${S_j}^{\prime}$ at $p$ is always a tangent vector of $T_p (\{{\pi}_{2,1}(p)\} \times {\mathbb{R}})$ with $\{{\pi}_{2,1}(p)\} \times \mathbb{R} \subset {\mathbb{R}}^2$.
		\item It is mapped to a point $p$ contained in exactly two manifolds ${S_{j}}^{\prime}$. We can also see that to each of such points, exactly one point of $X_{m_1,m_2,\{{S_j}^{\prime},fg_j\}_{j=1}^l,I}$ is mapped by ${\pi}_{m_1+m_2+2,2}$. 
	\end{itemize}		
				 
\end{Prop}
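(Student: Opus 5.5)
The plan is to work directly with the defining map
$e:=e_{\{{S_j}^{\prime},g_j\}}=(e_1,e_2)$ from the proof of Proposition \ref{prop:3}, writing $G_i(x):={\prod}_{j \in \{I_{{S_j}^{\prime}}=i\}} g_j(x)$. At a point $P=(x_0,y_{1,0},y_{2,0})$ of $X:=X_{m_1,m_2,\{{S_j}^{\prime},g_j\}_{j=1}^l,I}$, the point $P$ is critical for ${\pi}_{m_1+m_2+2,1} {\mid}_X$ if and only if the covector $dx_1$ lies in the span of $de_1,de_2$ at $P$. Here $dx_1$ is the covector that is $1$ on $\partial/\partial x_1$ and $0$ elsewhere, and by Proposition \ref{prop:3} the covectors $de_1,de_2$ are independent. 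Concretely,
$de_i=dG_i-2\,y_{i,0}\cdot dy_i$. I then split into the three cases P3-1, P3-2, P3-3 used in the proof of Proposition \ref{prop:3}.

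\textbf{Case P3-1.} If $x_0 \in D_{\{{S_j}^{\prime},g_j\}_{j=1}^l}$, then $G_1(x_0),G_2(x_0)>0$. Hence $y_{1,0}\neq 0$ and $y_{2,0}\neq 0$, so each $de_i$ has a nonzero $y_i$-component. A combination $a\,de_1+b\,de_2$ equal to $dx_1$ must have vanishing $y$-components, which forces $a=b=0$. This is a contradiction, so there is no critical point over the open region.

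\textbf{Case P3-2.} If $x_0$ lies in exactly one ${S_i}^{\prime}$ with $I_{{S_i}^{\prime}}=i_0$, then $y_{i_0,0}=0$ while $y_{i_0^{\prime},0}\neq 0$. Therefore $b=0$ for the other index, and we need $dx_1=a\,dG_{i_0}(x_0)$. Near $x_0$ we have $dG_{i_0}(x_0)=(\prod_{j\neq i}g_j(x_0))\,dg_i(x_0)$ with a nonzero prefactor, by \ref{P3-1} and \ref{P3-2}. So the condition is that $dg_i(x_0)$ is a multiple of $dx_1$, i.e. $\ker dg_i(x_0)=T_{x_0}{S_i}^{\prime}$ is the vertical line $T_{x_0}(\{{\pi}_{2,1}(x_0)\}\times\mathbb{R})$. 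This is the first bullet. Uniqueness of the preimage follows because $y_{i_0}=0$ is forced, while the fiber in the $y_{i_0^{\prime}}$ direction is a sphere of positive radius. Note, however, that the uniqueness claim in the statement is only asserted for the double points.

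\textbf{Case P3-3.} If $x_0$ lies in two curves, then by \ref{P3-4} one curve contributes to $G_1$ and the other to $G_2$. Hence $y_{1,0}=y_{2,0}=0$, and the preimage under ${\pi}_{m_1+m_2+2,2}$ is the single point $(x_0,0,0)$. Moreover $de_1,de_2$ reduce to nonzero multiples of $dg_{i_1}(x_0)$ and $dg_{i_2}(x_0)$. By the transversality \ref{P3-3} these two covectors span $T^*_{x_0}\mathbb{R}^2\ni dx_1$. So such a point is always critical, which gives the second bullet together with the uniqueness statement.

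The only delicate point is Case P3-3. There I must check that \ref{P3-4} really prevents two intersecting curves from both feeding into the same $G_i$. It also has to be justified that the other factors $g_j(x_0)$ are nonzero, since by \ref{P3-3} the point lies on at most two curves. Assembling the cases then gives the claimed characterization.
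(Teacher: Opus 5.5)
Your proof is correct and is essentially what the paper intends. The paper only says Proposition \ref{prop:4} follows by observing local structures, and your criterion $dx_1 \in \mathrm{span}(de_1,de_2)$, run through the same three cases as the proof of Proposition \ref{prop:3}, is precisely that verification; the two worries in your last paragraph are settled directly by the hypotheses \ref{P3-1}--\ref{P3-4}.

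The one slip is the phrase ``Uniqueness of the preimage follows'' in Case P3-2. Over a point lying on exactly one curve, the fiber of ${\pi}_{m_1+m_2+2,2}$ is a sphere $S^{m_{i_0^{\prime}}-1}$ of positive radius, hence has at least two points. That is exactly why the statement asserts uniqueness only over the double points.
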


\section{On our main result.}

\subsection{Our proof of Theorem \ref{thm:1}.}
We prove Theorem \ref{thm:1} and present its additional remark, Theorem \ref{thm:3}.
\begin{proof}[A proof of Theorem \ref{thm:1}]
In the Subsection \ref{subsec:3.3}, we also have a component of hyperbola $\{(x,p_{{\rm h},t}(x)) \mid x \in \mathbb{R}\} \subset {\mathbb{R}}^2$ and by rotating this around the origin $0$ by degree $(\frac{\pi}{2}-t) \times 2j$ for $0 \leq 2j< \frac{2\pi}{\frac{\pi}{2}-t}$ with $2j$ being even integers.
We obtain an explicit case of\\ $X_{m,\{S_j,f_j\}_{j=1}^l}:=\{(x,y)=(x,y_1,\cdots y_{m-1}) \in {\mathbb{R}}^n \times {\mathbb{R}}^{m-n+1} \mid {\prod}_{j=1}^l (f_j(x))-{\Sigma}_{j=1}^{m-n+1} {y_j}^2=0 \}$, defined in the Subsection \ref{subsec:3.4}. Put $l=\frac{\pi}{(\frac{\pi}{2}-t)}$
We define the $j$-th manifold $S_j$ to be the ($j+1$)-th component of a hyperbola here except fintiely many $S_j$ and we define the remaining $S_j$ in the following way. For the others, we do not change the angles of the rotations and we only change the initial $1$-dimensional real algebraic manifold slightly.

\begin{itemize}
\item Exactly $n_{\rm d}$ manifolds $S_j$ undefined here satisfies $\frac{3\pi}{2(\frac{\pi}{2}-t)}<2j<\frac{2\pi}{\frac{\pi}{2}-t}$ defined by considering $\{(x,p_{{\rm h},t,q_{p,s,1}}(x)) \mid x \in \mathbb{R}\} \subset {\mathbb{R}}^2$ before the rotation, instead of $p_{{\rm h},t}$.
\item Exactly $n_{\rm e}$ manifolds $S_j$ undefined here satisfies $0<2j<\frac{\pi}{2(\frac{\pi}{2}-t)}$ defined by considering $\{(x,p_{{\rm h},t,q_{p,s,1}}(x)) \mid x \in \mathbb{R}\} \subset {\mathbb{R}}^2$ before the rotation, instead of $p_{{\rm h},t}$.

\item  Exactly $n_{\rm c}$ manifolds $S_j$ undefined here satisfies $\frac{\pi}{2(\frac{\pi}{2}-t)}<2j<\frac{\pi}{\frac{\pi}{2}-t}$ defined by considering $\{(x,p_{{\rm h},t,q_{p,s,1}}(x)) \mid x \in \mathbb{R}\} \subset {\mathbb{R}}^2$ before the roation, instead of $p_{{\rm h},t}$.
\end{itemize} 
In addition, we define $f_1(x_1,x_2)=p_{{\rm h},t}(x_1)-x_2, p_{{\rm h},t,q_{p,s,i}}(x_1)-x_2$ first, and the remaining $f_j$ are defined canonically, according to the rotations.

Note that the number and symmetry of the arrangement of the manifolds $S_j \subset {\mathbb{R}}^2$ in the proof is not essential in our proof. Simply, we do this respecting symmetry. We also define $q_{p,s,i}$ ($i=2,3$) in the Subsection \ref{subsec:3.3}, where we do not use this in this paper.

We can obtain $X_{m,\{S_j,f_j\}_{j=1}^l}$ suitably and have an open disk $D_{R,{\rm o}}:=\{(x_1,x_2) \mid {x_1}^2+{x_2}^2<R>0\}$ in such a way that $X_{m,\{S_j,f_j\}_{j=1}^l}-{{{\pi}_{m+1,2}} {\mid}_{X_{m,\{S_j,f_j\}_{j=1}^l}}}^{-1}(D_{R,{\rm o}})$ consists of $l$ $m$-dimensional connected smooth manifolds. We discuss compactifications of the restrictions of ${\pi}_{m+1,1} {\mid}_{X_{m,\{S_j,f_j\}_{j=1}^l}}$ to these $m$-dimensional manifolds and a compactification of  ${\pi}_{m+1,1} {\mid}_{X_{m,\{S_j,f_j\}_{j=1}^l}}$. 

First, we forget exactly $n_{\rm c}$ connected manifolds containing some points mapped to points of manifolds $S_j$ with $\frac{\pi}{2(\frac{\pi}{2}-t)}<2j<\frac{\pi}{\frac{\pi}{2}-t}$ by ${\pi}_{m+1,2}$ and containing infinitely and countably many points of the form $\{(x,p_{{\rm h},t,q_{p,s,i}}(x),0) \in {\mathbb{R}}^2 \times {\mathbb{R}}^{m-1} \mid x \in S(p_{{\rm h},t,q_{p,s,i}}(x))\}$ suitably rotated, and exactly $n_{\rm c}$ connected manifolds containing some points mapped to points of manifolds $S_j$ with $\frac{\pi}{(\frac{\pi}{2}-t)}<2j<\frac{3\pi}{2(\frac{\pi}{2}-t)}$. For the remaining $l-2n_{\rm c}$ connected manifolds, we can consider the one-point compactifications to have manifolds diffeomorphic to $D^m$, and compactifications of the restrictions of ${\pi}_{m+1,1} {\mid}_{X_{m,\{S_j,f_j\}_{j=1}^l}}$ there in the topology category. 

We consider the $n_{\rm c}$ pairs of the $m$-dimensional connected manifolds each of which consists of one containing some points mapped to some points in $S_j$ with $\frac{\pi}{2(\frac{\pi}{2}-t)}<2j<\frac{\pi}{\frac{\pi}{2}-t}$ by ${\pi}_{m+2,2}$ and one containing some points mapped to some points in $S_j$ with $\frac{\pi}{(\frac{\pi}{2}-t)}<2j<\frac{3\pi}{2(\frac{\pi}{2}-t)}$ by ${\pi}_{m+2,2}$. From them, we can have $n_{\rm c}$ compactifications by adding circles to have connected manifolds homeomorphic to $S^{m-1} \times D^1$. Compactifications of the restrictions of ${\pi}_{m+1,1} {\mid}_{X_{m,\{S_j,f_j\}_{j=1}^l}}$ there in the topology category are also naturally obtained. 

In short, by $X_{m,n_{\rm d, e, c}}:=X_{m,\{S_j,f_j\}_{j=1}^l}$, we have a desired case.
More precisely, we explain a Reeb-D-C of ${\pi}_{m+1,1} {\mid}_{X_{m,n_{\rm d, e, c}}}$ by using notation in Definition \ref{def:4}. We have $X:=X_{m,n_{\rm d, e, c}}$ and $Y:=\mathbb{R}$. We need the embedding ${\phi}_X:X_{m,n_{\rm d, e, c}}:=X_{m,\{S_j,f_j\}_{j=1}^l} \rightarrow X_0$ as a canonical smooth embedding into a smooth manifold $X_0$ diffeomorphic to $S^m$ in the case $n_{\rm c}=0$ or a connected sum of $n_{\rm c}>0$ copies of $S^1 \times S^{m-1}$ taken in the smooth category. We also need a natural smooth embedding ${\phi}_Y:\mathbb{R} \rightarrow S^1=\{(\cos \theta, \sin \theta) \mid 0 \leq \theta<2\pi\}$ satisfying ${\phi}_Y(\mathbb{R})=S^1-\{(0,-1)\}$. 

We explain the structure of a $1$-dimensional CW complex for a Reeb-D-C of ${\pi}_{m+1,1} {\mid}_{X_{m,n_{\rm d, e, c}}}$. Remember arguments on critical points of $p_{{\rm h},t}$ and $p_{{\rm h},t,q_{p,s,i}}$. Refer to \cite[Theorem 3.1]{saeki1}, \cite[Theorems 2.1 and 2.8]{saeki2} and \cite[Theorem 7.5]{gelbukh1}. We explain these papers shortly. Conditions for Reeb spaces to have the structures of graphs, or (equivalently,) 1-dimensional complexes which are (locally) finite, are discussed in \cite{saeki1, saeki2}. For manifolds or more generally, connected, compact, separable and locally connected spaces ({\it Peano continua}), Reeb spaces of continuous functions there are also of such spaces (\cite{gelbukh1}).

This completes the proof.

\end{proof}

\begin{Thm}
\label{thm:3}
Theorem \ref{thm:1} is realized by the following.
\begin{enumerate}
\item $X_{m,n_{\rm d,e,c}} \subset {\mathbb{R}}^{m+1}$ can be chosen by a suitable $X_{m,\{S_j,f_j\}_{j=1}^l} \subset {\mathbb{R}}^{m+1}$ with $S_j \subset {\mathbb{R}}^2$ and a suitable real analytic function $f_j:{\mathbb{R}}^2 \rightarrow \mathbb{R}$.
\item We have a compactification of the smooth function ${\pi}_{m+1,1} {\mid}_{X_{m,n_{\rm d,e,c}}}:X:=X_{m,n_{\rm d,e,c}} \rightarrow Y:=\mathbb{R}$ in the topology category as a continuous map $c_0:X_0 \rightarrow Y_0$: the manifold $X_0$ is homeomorphic to $S^m$ in the case $n_{\rm c}=0$ or a connected sum of $n_{\rm c}>0$ copies of $S^1 \times S^{m-1}$ taken in the smooth category, $Y_0:=S^1$ and for the embedding ${\phi}_{Y}:Y:=\mathbb{R} \rightarrow Y_0$ in Definition \ref{def:4} satisfies $Y_0-{\phi}_Y(\mathbb{R})=\{(0,-1)\} \subset S^1$. We also have the Reeb-D-C of ${\pi}_{m+1,1}  {\mid}_{X_{m,n_{\rm d,e,c}}}$ canonically.
\end{enumerate}
\end{Thm}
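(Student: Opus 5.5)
Theorem \ref{thm:3} makes the proof of Theorem \ref{thm:1} explicit, so the plan is to go back through that construction and check three things: the domain is of the stated form, the compactified space $X_0$ has the stated topology, and the compactified map is continuous with target $Y_0:=S^1$.

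\textbf{Step 1: the domain.} I would fix $t$ with $\frac{\pi}{2}$ divisible by $2(\frac{\pi}{2}-t)$. Let $S_j$ be the rotated images of the hyperbola components $\{(x,p_{{\rm h},t}(x))\}$, or, for the prescribed $n_{\rm d}$, $n_{\rm e}$, $2n_{\rm c}$ indices, of the perturbed curves $\{(x,p_{{\rm h},t,q_{p,s,1}}(x))\}$. Let $f_j$ be the rotated functions $p(x_1)-x_2$. Each $f_j$ is real analytic on ${\mathbb{R}}^2$, since $p_{{\rm h},t}$ and $q_{p,s,1}$ are. I would then check the hypotheses of Subsection \ref{subsec:3.4}.
\begin{itemize}
\item The $S_j$ are mutually disjoint.
\item The closure of $D_{\{S_j,f_j\}}$ minus $D_{\{S_j,f_j\}}$ is $\sqcup S_j$.
\end{itemize}
For $t$ near $\frac{\pi}{2}$ each curve lies in a thin sector between its two asymptotes, which bound an angle $2(\frac{\pi}{2}-t)$. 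Consecutive rotations differ by exactly this angle, so the curves do not meet. The perturbation $q_{p,s,1}$ is positive, tends to $0$ and is small, so it keeps each curve inside its sector. The implicit function argument of Subsection \ref{subsec:3.4} then shows that $X_{m,\{S_j,f_j\}_{j=1}^l}$ is a smooth, indeed real analytic, hypersurface of ${\mathbb{R}}^{m+1}$. This gives item (1).

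\textbf{Step 2: the compactification.} Outside a large disk $D_{R,{\rm o}}$, the manifold splits into $l$ ends, one for each asymptotic direction between consecutive curves. Each end is diffeomorphic to $S^{m-1}\times[0,\infty)$, because over a region bounded by two curves the fibres are spheres collapsing on the boundary. I would compactify as follows.
\begin{itemize}
\item Each unpaired end gets a single point, capping it by a disk; globally this is the one-point compactification of that end.
\item The $n_{\rm c}$ designated pairs of ends, in opposite directions, are each joined by gluing in a tube: adjoin a circle's worth of limit points, giving the $S^{m-1}\times D^1$ pieces described in the proof of Theorem \ref{thm:1}.
\end{itemize}
Before any gluing, capping all ends gives a disk capped to $S^m$. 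Each tube joining two ends adds a $1$-handle, giving $S^m\#_{n_{\rm c}}(S^1\times S^{m-1})$. I expect this to be only a homeomorphism statement, not a smooth one.

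\textbf{Step 3: the map to $S^1$.} Compose ${\pi}_{m+1,1}$ with ${\phi}_Y$, a diffeomorphism from $\mathbb{R}$ onto $S^1-\{(0,-1)\}$, so that $\pm\infty$ both go to $(0,-1)$. Send every added point to $(0,-1)$. Along every end $|x_1|\to\infty$ except the two vertical directions. I would choose the rotation so that no asymptotic direction is vertical, which forces $|{\pi}_{m+1,1}|\to\infty$ uniformly on each end. That uniformity gives continuity of $c_0$ at the added points.

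\textbf{Step 4: the Reeb-D-C and the main obstacle.} Finally I would verify that $(R_{c_0},\,q_{c_0}(S\cup(X_0-{\phi}_X(X))),\,\bar{c_0})$ is a pre-digraph, using Proposition \ref{prop:2} and the results of \cite{saeki1,saeki2,gelbukh1}. This is the step I expect to be hardest. Along the perturbed ends, $q_{p,s,1}$ has infinitely many critical points accumulating toward $x\to-\infty$, which produce infinitely many vertices. Two things must be shown. First, $S$ together with the added points is closed in $R_{c_0}$. Second, the non-finite points are exactly the images of the added points, and they give the prescribed edges in ${\rm GDNF}$. I would handle these locally on each end by an explicit monotonicity analysis of the level sets of $x_1$ restricted to the region between two curves.
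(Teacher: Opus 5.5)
Your construction is the paper's own, from its proof of Theorem~\ref{thm:1}, which Theorem~\ref{thm:3} only records. It uses rotated components of hyperbolas, some replaced by graphs of $p_{{\rm h},t}+q_{p,s,1}$. Unpaired ends get a one-point compactification, $n_{\rm c}$ pairs of ends are joined, and ${\phi}_Y$ maps onto $S^1-\{(0,-1)\}$. Steps 1--3 are fine and more explicit than the paper. In Step 3 nothing needs to be chosen: with $\frac{\pi}{2}-t=\frac{\pi}{4N}$ the ends point in the directions $\frac{\pi}{2}+(2k+1)\frac{\pi}{4N}$, and none of these is vertical. Also, for $m>2$, joining two ends $S^{m-1}\times[0,\infty)$ into $S^{m-1}\times D^1$ adjoins an $(m-1)$-sphere, not a circle; the paper has the same slip.

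The genuine problem is the goal you set in Step 4: the non-finite points are \emph{not} the images of all added points. If they were, the graph diagram for NF would have one edge per end, i.e.\ $l-n_{\rm c}$ edges with $l=4N$, instead of $n_{\rm d}+n_{\rm e}+n_{\rm c}$. For instance, if $n_{\rm d}=n_{\rm e}=n_{\rm c}=0$, then $R_{c_0}$ is a finite graph.

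In the parameter before rotation, each end is bounded by the $x\to+\infty$ branch of one curve and the $x\to-\infty$ branch of the next. Only the latter can oscillate, since $q'_{p,s,1}\to 0$ as $x\to+\infty$. If that branch is unperturbed, both bounding branches have tangents converging to the non-vertical asymptote. So far out they are graphs over $x_1$, every vertical slice of the arm is one interval, and the added point of an unpaired end is an ordinary degree-one vertex.

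Non-finiteness only appears when the $x\to-\infty$ branch of a perturbed curve runs into the end. Then $p'_{{\rm h},t}+q'_{p,s,1}$ takes every value infinitely often. After a rotation by an angle outside $\pi\mathbb{Z}$, this gives infinitely many vertical tangents, i.e.\ critical points by Proposition~\ref{prop:2}, accumulating at the added point. A curve rotated by $0$ or $\pi$ stays a graph over $x_1$ and contributes nothing.

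So you must place the perturbed curves, as the paper's index ranges do, so that oscillating branches enter exactly:
$n_{\rm d}$ ends with $x_1\to+\infty$;
$n_{\rm e}$ ends with $x_1\to-\infty$;
at least one end of each joined pair, where the two ends of a pair have $x_1\to+\infty$ and $x_1\to-\infty$ respectively.
You then need to prove that every other end is finite. After that, $G-{\rm NF}(G,S_G)$ is connected, which gives the single vertex. The sign of $x_1$ on each end decides whether the corresponding edge departs, enters, or, for a joined pair, is a loop.
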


\subsection{Our proof of Theorem \ref{thm:2}.}
We prove Theorem \ref{thm:2} and present its additional remark, Theorem \ref{thm:4}.
\begin{proof}[A proof of Theorem \ref{thm:2}]

STEP 2-1 Defining a case for $X_{m,1}$. \\
We first set a case for $X_{m,1}$.
We also choose two suitable integers $m_1>0$ and $m_2>0$ with $m=m_1+m_2$.
We define three $1$-dimensional real algebraic manifolds ${S_{1,1}}^{\prime}, {S_{1,2}}^{\prime}, {S_{1,3}}^{\prime} \subset {\mathbb{R}}^2$ and three real analytic functions $g_{1,1}(x_1,x_2)$, $g_{1,2}(x_1,x_2)$ and $g_{1,3}(x_1,x_2)$ as follows.
\begin{enumerate}[label={\rm(}\text{\rm T2-}\arabic*{\rm )}]
\item \label{T2-1}
${S_{1,1}}^{\prime}=\{(e^{-x^2}{\sin}^2 x,x) \mid x \in \mathbb{R}\}$. $g_{1,1}(x_1,x_2):=x_1-e^{-{x_2}^2}{\sin}^2 x_2$.
\item \label{T2-2} ${S_{1,2}}^{\prime}=\{(\frac{a_2}{{(x+a_1)}^2+1},x) \mid x \in \mathbb{R}\}$. $g_{1,2}(x_1,x_2):=\frac{a_2}{{(x_2+a_1)}^2+1}-x_1$ for suitably chosen positive numbers $a_1>0$ and $a_2>0$. 
\item \label{T2-3} 
${S_{1,3}}^{\prime}=\{(x,a_3) \mid x \in \mathbb{R}\}$. $g_{1,3}(x_1,x_2):=a_3-x_2$ for a suitably chosen number $a_3>-a_1$ with $a_3<0$. 
\item \label{T2-4}
We can choose ${S_{j}}^{\prime}:={S_{1,j}}^{\prime}$ and $g_{j}:={g_{1,j}}$ to have a case of $D_{\{{S_j}^{\prime},g_j\}_{j=1}^3} \subset {\mathbb{R}}^2$ before. We can have this in such a way that ${S_{1,j}}^{\prime} \bigcap {\overline{D_{\{{S_j}^{\prime},g_j\}_{j=1}^l}}}^{{\mathbb{R}}^2}$ is non-empty if and only if $j=1,2$. We can also have this in such a way that ${S_{1,1}}^{\prime} \bigcap {S_{1,2}}^{\prime} \bigcap {\overline{D_{\{{S_j}^{\prime},g_j\}_{j=1}^l}}}^{{\mathbb{R}}^2}$ is a one-point set of the form $\{(\frac{a_2}{{(a_4+a_1)}^2+1},a_4)=(e^{-{a_4}^2}{\sin}^2 a_4,a_4)\}$ with $-a_1<a_4<a_3<0$. 
\item \label{T2-5}
We can also assign $I_{S_{1,j}^{\prime}}=j$ for $j=1,2$ and $I_{S_{1,3}^{\prime}}=0$, to have $X_{m_1,m_2,\{{S_j}^{\prime},g_j\}_{j=1}^3,I}$ and we define this as $X_{m,1}$.
\end{enumerate}
STEP 2-2 STEP 2-1 Defining a case for $X_{m,2}$ where some $g_2$ is not real analytic on a subset of Lebesgue measure $0$. \\
Next we set a case for $X_{m,2}$.

We define two $1$-dimensional real algebraic manifolds ${S_{2,1}}^{\prime}, {S_{2,2}}^{\prime} \subset {\mathbb{R}}^2$ and two polynomials $g_{2,1}(x_1,x_2)$ and $g_{2,2}(x_1,x_2)$ as follows.
\begin{enumerate}[start=6, label={\rm(}\text{\rm T2-}\arabic*{\rm )}]
\item \label{T2-6} ${S_{2,1}}^{\prime}=\{(e^{-\frac{1}{x^2}}{\sin}^2 \frac{1}{x},x)  \mid x \in \mathbb{R}-\{0\}\} \sqcup \{(0,0)\}$. $g_{2,1}(x_1,x_2):=x_1-e^{-\frac{1}{{x_2}^2}}{\sin}^2 \frac{1}{x_2}$ for $x_2 \neq 0$ and $g_{2,1}(x_1,x_2)=x_1$ for $x_2=0$.
\item \label{T2-7} $\{(x_1,x_2) \mid g_{2,2}(x_1,x_2) \geq 0\} \subset {\mathbb{R}}^2$ is an ellipsoid and ${S_{2,2}}^{\prime}:=\{(x_1,x_2) \mid g_{2,2}(x_1,x_2)=0\}$.
\item \label{T2-8}  ${S_{2,2}}^{\prime} \bigcap {\overline{D_{\{{S_j}^{\prime},g_j\}_{j=1}^2}}}^{{\mathbb{R}}^2}$ is the graph of the form $\{(g(x),x) \mid 0 \leq x\leq -\frac{1}{a_4}\}$ with a suitable smooth function $g:\{x \mid -\epsilon<x<-\frac{1}{a_4}+\epsilon\} \rightarrow \mathbb{R}$ satisfying the following.

\begin{enumerate}[label={\rm(}\text{\rm T2-8-}\arabic*{\rm )}]
\item \label{T2-8-1} $\epsilon>0$ is a sufficiently small positive number.
\item \label{T2-8-2} $g$ has exactly one critical point.
\item \label{T2-8-3} There exists a unique point where $g$ has a maximum. The unique point is $x:=x_0:=\frac{1}{a_1}$. 
\end{enumerate}
\item \label{T2-9} We can choose ${S_{j}}^{\prime}:={S_{2,j}}^{\prime}$ and $g_{j}:={g_{2,j}}$ to have a case of $D_{\{{S_j}^{\prime},g_j\}_{j=1}^2} \subset {\mathbb{R}}^2$ before. We have this in such a way that ${S_{2,j}}^{\prime} \bigcap {\overline{D_{\{{S_j}^{\prime},g_j\}_{j=1}^2}}}^{{\mathbb{R}}^2}$ is non-empty for $j=1,2$.
\item \label{T2-10}
We can also assign $I_{S_{2,j}^{\prime}}=j$ for $j=1,2$, to have $X_{m_1,m_2,\{{S_j}^{\prime},g_j\}_{j=1}^2,I}$. We define $X_{m,2}:=X_{m_1,m_2,\{{S_j}^{\prime},g_j\}_{j=1}^2,I}$ with $m=m_1+m_2$. 
\end{enumerate}
\noindent STEP 2-3 The manifold $X_{m,1}$, the function ${\pi}_{m+2,1} {\mid}_{X_{m,1}}$ and the Reeb space $R_{{\pi}_{m+2,1} {\mid}_{X_{m,1}}}$. \\
Hereafter, we may refer to related preprints of the author to some extent, mainly \cite{kitazawa8}, and \cite{kitazawa9, kitazawa10, kitazawa11, kitazawa12}. We also discuss important arguments in a self-contained way. We consider the restriction of the projection to the second component, denoted by ${\pi}_{m+2,1,2}:{\mathbb{R}}^{m+2} \rightarrow \mathbb{R}$. We investigate ${\pi}_{m+2,1,2} {\mid}_{X_{m,1}}$ to investigate $X_{m,1}$. We have a proper smooth function whose image is diffeomorphic to $\{x \mid x>0\}$ and which is with exactly one critical point and with regular level sets being empty or diffeomorphic to $S^{m-1}$. 
From Proposition \ref{prop:4}, by a fundamental argument on differential topology, $X_{m,1}$ is diffeomorphic to ${\mathbb{R}}^m$.  We investigate ${\pi}_{m+2,1} {\mid}_{X_{m,1}}$. Its image is $\{x \mid 0 \leq x \leq a_2\}$. On the preimage ${{\pi}_{m+2,1} {\mid}_{X_{m,1}}}^{-1}(\{x \mid x>0\})$, it is proper and by Proposition \ref{prop:4}, the image of the critical set of the restriction there is closed and discrete in $\{x \mid x>0\}$. In addition, the Reeb digraph $R_{{{\pi}_{m+2,1} {\mid}_{X_{m,1}}}^{-1}(\{x \mid x>0\})}$ can be defined as a $1$-dimensional locally finite CW complex the closure of each of whose edges is homeomorphic to $D^1$. The preimage ${{\pi}_{m+2,1} {\mid}_{X_{m,1}}}^{-1}(\{0\})$ is a discrete subset of $X_{m,1}$ and mapped onto a discrete subset of $R_{{\pi}_{m+2,1} {\mid}_{X_{m,1}}}$ by the restriction of the quotient map $q_{{\pi}_{m+2,1} {\mid}_{X_{m,1}}}$, in an injective way. For each point in the discrete subset of $R_{{\pi}_{m+2,1} {\mid}_{X_{m,1}}}$, due to the structure of the manifolds and maps, we have a small open connected neighborhood homeomorphic to $\{x \mid 0 \leq x <1\}$ containing no vertex from the Reeb (di)graph of ${\pi}_{m+2,1} {\mid}_{{{\pi}_{m+1,1} {\mid}_{X_{m,1}}}^{-1}(\{x \mid x>0\})}$. In addition, we can see that arbitrary distinct two points of $R_{{\pi}_{m+2,1} {\mid}_{X_{m,1}}}$ can be separated by (small and connected) open neighborhoods in $R_{{\pi}_{m+2,1} {\mid}_{X_{m,1}}}$.

From this, the Reeb space is Hausdorff and regarded to be the Reeb digraph of ${\pi}_{m+2,1} {\mid}_{X_{m,1}}$, which is also locally finite and the closure of each of whose edges is homeomorphic to $D^1$.\\
\ \\
\noindent STEP 2-4 The manifold $X_{m,2}$, the function ${\pi}_{m+2,1} {\mid}_{X_{m,2}}$ and the Reeb space $R_{{\pi}_{m+2,1} {\mid}_{X_{m,2}}}$. \\

We investigate ${\pi}_{m+2,1,2} {\mid}_{X_{m,2}}$ to investigate $X_{m,2}$. 
From Proposition \ref{prop:4}, we have a proper smooth function whose image is diffeomorphic to $D^1$ and which is with exactly two critical points and with regular level sets being empty or diffeomorphic to $S^{m-1}$. From this, by a fundamental argument on differential topology, $X_{m,2}$ is diffeomorphic to ${\mathbb{R}}^m$.  We investigate ${\pi}_{m+2,1} {\mid}_{X_{m,2}}$. Its image is $\{x \mid 0 \leq x \leq a_2\}$. On the preimage ${{\pi}_{m+2,1} {\mid}_{X_{m,2}}}^{-1}(\{x \mid x>0\})$, it is proper and by Proposition \ref{prop:4}, the image of the critical set of the restriction is closed and discrete in $\{x \mid x>0\}$. In addition, the Reeb digraph $R_{{{\pi}_{m+2,1} {\mid}_{X_{m,2}}}^{-1}(\{x \mid x>0\})}$ can be defined as a $1$-dimensional locally finite CW complex the closure of each of whose edges is homeomorphic to $D^1$. The preimage ${{\pi}_{m+2,1} {\mid}_{X_{m,2}}}^{-1}(\{0\})$ is a closed subset of $X_{m,2}$ which contains countably many points and the origin $0 \in {\mathbb{R}}^{m+2}$, and mapped onto a subset in $R_{{\pi}_{m+2,1} {\mid}_{X_{m,2}}}$ by the restriction of the quotient map $q_{{\pi}_{m+2,1} {\mid}_{X_{m,2}}}$ there, in an injective way. Due to the structure of the manifolds and maps, for each of the point of the subset except the point $q_{{\pi}_{m+2,1} {\mid}_{X_{m,2}}}(0)$, we have some small open connected neighborhood of $R_{{\pi}_{m+2,1} {\mid}_{X_{m,2}}}$ containing no vertex from the Reeb (di)graph of ${\pi}_{m+2,1} {\mid}_{{{\pi}_{m+2,1} {\mid}_{X_{m,2}}}^{-1}(\{x \mid x>0\})}$ and at most one point of $q_{{\pi}_{m+2,1} {\mid}_{X_{m,2}}}({{\pi}_{m+2,1} {\mid}_{X_{m,2}}}^{-1}(\{0\}))$. In addition, every open neighborhood of $q_{{\pi}_{m+2,1} {\mid}_{X_{m,2}}}(0)$ in $R_{{\pi}_{m+2,1} {\mid}_{X_{m,2}}}$ must contain infinitely and countably many points from $q_{{\pi}_{m+2,1} {\mid}_{X_{m,2}}}({{\pi}_{m+2,1} {\mid}_{X_{m,2}}}^{-1}(\{0\})) \subset R_{{\pi}_{m+2,1} {\mid}_{X_{m,2}}}$.   

From this, with \cite[Theorem 7.5]{gelbukh1}, the Reeb space is a Peano continuum and regarded to be the Reeb digraph of ${\pi}_{m+2,1} {\mid}_{X_{m,2}}$, with exactly one non-finite point $q_{{\pi}_{m+2,1} {\mid}_{X_{m,2}}}(0)$. \\
\ \\
STEP 2-5 Discussing the properties (\ref{thm:2.3}) and (\ref{thm:2.4}): in the previous arguments the properties (\ref{thm:2.1}) and (\ref{thm:2.2}) are already shown.\\

In short, from Proposition \ref{prop:4}, the coordinates, and our construction, Properties (\ref{thm:2.3}) and (\ref{thm:2.4}) are also immediately shown. \\
\ \\
This completes the proof.

\end{proof}
\begin{Thm}
\label{thm:4}
Theorem \ref{thm:2} is realized by the following.
\begin{enumerate}
\item $X_{m,1}$ can be chosen to be a real analytic manifold of the form $X_{m_1,m_2,\{{S_j}^{\prime},g_j\}_{j=1}^3,I}$ for positive integers $m_1$ and $m_2$ with $m=m_1+m_2$.
\item $X_{m,2}$ can be chosen to be a smooth manifold of the form $X_{m_1,m_2,\{{S_j}^{\prime},g_j\}_{j=1}^2,I}$ for a smooth function $g_1:{\mathbb{R}}^2 \rightarrow \mathbb{R}$ being real-analytic and elementary outside a subset of Lebesgue measure $0$ in ${\mathbb{R}}^2$, a real analytic and elementary function $g_2:{\mathbb{R}}^2 \rightarrow \mathbb{R}$, and positive integers $m_1$ and $m_2$ with $m=m_1+m_2$.
\end{enumerate}
\end{Thm}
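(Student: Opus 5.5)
Theorem \ref{thm:4} is a refinement of Theorem \ref{thm:2}, so the plan is to reread the proof of Theorem \ref{thm:2} and check that the objects built in STEP 2-1 and STEP 2-2 have the stated form and regularity.

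\emph{Part (1).} The manifold $X_{m,1}$ was defined in \ref{T2-5} as $X_{m_1,m_2,\{{S_j}^{\prime},g_j\}_{j=1}^3,I}$, where $m_1,m_2>0$ and $m=m_1+m_2$. So it remains only to confirm that this set is a real analytic manifold. The data are the following.
\begin{itemize}
\item The functions $g_{1,1}(x_1,x_2)=x_1-e^{-{x_2}^2}{\sin}^2 x_2$ and $g_{1,2}$, $g_{1,3}$ are compositions of $\exp$, $\sin$, polynomials and the quotient by the positive polynomial ${(x_2+a_1)}^2+1$. Hence they are real analytic on ${\mathbb{R}}^2$.
\item Each ${S_{1,j}}^{\prime}$ is the graph of a real analytic function of $x_2$, so it is a $1$-dimensional real analytic manifold and the zero set of $g_{1,j}$.
\end{itemize}
I would then check \ref{P3-1}--\ref{P3-4} against \ref{T2-4}.
\begin{itemize}
\item Only ${S_{1,1}}^{\prime}$ and ${S_{1,2}}^{\prime}$ meet the closure of $D$.
\item They meet there in the single point with second coordinate $a_4$. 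This point is a transverse intersection, since both curves are graphs over the $x_2$-axis with different slopes, which $a_1,a_2$ can be chosen to ensure.
\item The labels are distinct: $I_{{S_{1,1}}^{\prime}}=1$ and $I_{{S_{1,2}}^{\prime}}=2$.
\end{itemize}
With these conditions in place, Proposition \ref{prop:3} gives that $X_{m,1}$ is an $m$-dimensional real analytic manifold.

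\emph{Part (2).} The manifold $X_{m,2}$ was defined in \ref{T2-10} as $X_{m_1,m_2,\{{S_j}^{\prime},g_j\}_{j=1}^2,I}$. Here $g_2=g_{2,2}$ is a quadratic polynomial whose nonnegative set is an ellipsoid (\ref{T2-7}), so it is real analytic and elementary.

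For $g_1=g_{2,1}$:
\begin{itemize}
\item On the open set $\{x_2\neq 0\}$ it is the elementary real analytic function $x_1-e^{-1/{x_2}^2}{\sin}^2(1/x_2)$.
\item The exceptional set $\{x_2=0\}$ is a line, so it has Lebesgue measure $0$.
\item $g_1$ is smooth on all of ${\mathbb{R}}^2$. This is the standard flatness argument: every $x_2$-derivative of $e^{-1/{x_2}^2}{\sin}^2(1/x_2)$ is a finite sum of terms $P(1/x_2)e^{-1/{x_2}^2}\cdot(\text{bounded trigonometric term})$ with $P$ a polynomial. Each such term tends to $0$ as $x_2\to 0$, so all derivatives extend continuously by $0$.
\end{itemize}

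Smoothness of $X_{m,2}$ then follows from the proof of Proposition \ref{prop:3}. That proof uses only the implicit function theorem and the rank-$2$ computation in Cases P3-1 to P3-3, so it goes through verbatim in the smooth category.

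The main obstacle is checking that Case P3-2 and Case P3-3 still apply near the origin, where ${S_{2,1}}^{\prime}$ is only smooth. There $\partial g_1/\partial x_1=1$, so the gradient of $g_1$ is nonzero everywhere and Case P3-2 works. For Case P3-3, the intersection points of ${S_{2,1}}^{\prime}$ and ${S_{2,2}}^{\prime}$ in the closure of $D$ must be transverse.
\begin{itemize}
\item At the origin, ${S_{2,1}}^{\prime}$ has vertical tangent $(0,1)$ by flatness.
\item The ellipse ${S_{2,2}}^{\prime}$ passes through the origin as the graph $x_1=g(x_2)$ with $0\le x_2\le -1/a_4$, by \ref{T2-8}. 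I would arrange, through the choice of the ellipse in \ref{T2-7}--\ref{T2-8}, that the ellipse crosses the $x_2$-axis non-tangentially at the origin, so that $g'(0)\neq 0$. This gives transversality there.
\item The other endpoint, at $x_2=-1/a_4$, is handled by choosing the ellipse generically. The remaining oscillating points of ${S_{2,1}}^{\prime}$ lie outside the closure of $D$ or in its interior.
\end{itemize}
Once this choice is fixed, the rank-$2$ argument applies and $X_{m,2}$ is a smooth manifold of the asserted form. Combined with Steps 2-3 to 2-5 of the proof of Theorem \ref{thm:2}, this realizes Theorem \ref{thm:2}.
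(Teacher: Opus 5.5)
Your proposal is correct and follows the paper's route. The paper gives no separate proof of Theorem \ref{thm:4}; it is read off from the construction in STEP 2-1 and STEP 2-2 of the proof of Theorem \ref{thm:2}, with Proposition \ref{prop:3} supplying the manifold structure, and your extra checks fill in details the paper leaves implicit. These are the flatness argument for the smoothness of $g_{2,1}$ and transversality at the origin via $g'(0)\neq 0$, which is in fact already forced by \ref{T2-8-2} since the only critical point of $g$ is at $1/a_1\neq 0$.
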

\subsection{Additional exposition.}
We present a future problem as follows.
\begin{Prob}
	\label{prob:3}
	Can we have any $1$-dimensional connected and finite CW complexes whose edges are oriented as the graph diagram for NF of the Reeb graph of a smooth or real analytic functions similar to ones in the present paper?

\end{Prob}
This is especially related to Theorem \ref{thm:1} (\ref{thm:3}). It has been shown in the case where the $1$-dimensional CW complex is connected and has exactly one vertex.	

Remark \ref{rem:1} is related to Theorem \ref{thm:2} (\ref{thm:4}). 
\begin{Rem}
	\label{rem:1}
$X_{m,2}$ and ${\pi}_{m+1,1} {\mid}_{X_{m,2}}$, presented in Theorem \ref{thm:2}	(\ref{thm:4}), are similar to ones in \cite[Main Theorem 2]{kitazawa8}. \cite[Main Theorem 2]{kitazawa8} is a pioneering example of smooth submanifolds with no boundary in Euclidean spaces represented as the zero sets of some smooth maps being real analytic and elementary functions outside some subsets of Lebesgue measure $0$ and has been constructed by the author himself.

Situations are also a bit different. For example, ${S_{2,2}}^{\prime} \bigcap {\overline{D_{\{{S_j}^{\prime},g_j\}_{j=1}^2}}}^{{\mathbb{R}}^2}$ is not of the form of a graph of the form $\{(g(x),x) \mid 0 \leq x\leq -\frac{1}{a_4}\}$, in \cite[Main Theorem 2]{kitazawa8}.

Note that roughly, Theorems \ref{thm:2} may be regarded to be a theorem on relationship between \cite[Main Theorem 1]{kitazawa8} and \cite[Main Theorem 2]{kitazawa8}.
\end{Rem}
\section{Conflict of interest and Data availability.}
 The author is a researcher at Osaka Central Advanced Mathematical Institute (OCAMI researcher). This is supported by MEXT Promotion of Distinctive Joint Research Center Program JPMXP0723833165. He thanks the people for the hospitality, where he is not employed by the institute or the projects. 
  %Some of works by other researchers and this version may overlap in some of the contents due to the nature that our problems are natural in theory of Morse functions and applications to differential topology and that related mathematical studies are very fundamental and classical in some senses, for example. However the present version of our paper is presented independent of these work. \\
  %Saga Souhatsu Mathematical Seminar (http://inasa.ms.saga-u.ac.jp/Japanese/saga-souhatsu.html), inviting the author as a speaker, is funded and supported by JST Fusion Oriented REsearch for disruptive Science and Technology JPMJFR202U: the author was a speaker on 2024/7/12 supported by this project.\\
 
No data other than the present file is generated, related to the present study. We do not assume non-trivial arguments in preprints which are still unpublished. We may refer to these preprints to some extent.

\end{document}